\newtheorem{theorem}{Theorem}[section]
\newtheorem{lemma}[theorem]{Lemma}
\newtheorem{corollary}[theorem]{Corollary}
\newtheorem{proposition}[theorem]{Proposition}
\theoremstyle{definition}
\theoremstyle{remark}
\newtheorem{remark}[theorem]{Remark}
\numberwithin{equation}{section}
\newcommand*{\eps}{\varepsilon} % epsilon
\DeclareMathOperator*{\sgn}{sgn}
\def\ve{\varepsilon}
\def\Ex{{\mathbb E}}
\def\Pr{{\mathbb P}}
\def\R{{\mathbb R}}
\def\er{{\mathbb R}}
\def\ind{{\mathbf 1}}
\def\supi{\sup_{1\le i \le n}}
\begin{document}

\title[Comparison of weak and strong moments for indep. coordinates]{Comparison of weak and strong moments \\ for vectors with independent coordinates}
\author{Rafa{\l} Lata{\l}a and Marta Strzelecka}
\date{7.12.2016}
\address{Institute of Mathematics, University of Warsaw, Banacha 2, 02--097 Warsaw, Poland.}
\email{rlatala@mimuw.edu.pl, martast@mimuw.edu.pl}
\thanks{The research of RL was supported by the National Science Centre, Poland grant 2015/18/A/ST1/00553 and
of MS by  the National Science Centre, Poland grant 2015/19/N/ST1/02661}

%\subjclass[2010]{Primary:  ???   . Secondary:    ???   .}

%\keywords{ ??? }

\begin{abstract}
We show that for $p\ge 1$, the $p$-th moment of suprema of linear combinations of independent
centered random variables are comparable with the sum of the first moment and the weak $p$-th moment provided that $2q$-th and $q$-th integral moments of these variables are comparable for all $ q \ge 2$. 
The latest condition turns out to be necessary in the i.i.d. case.   
\end{abstract}

\maketitle

\section{Introduction and Main Results}

In many problems arising in probability theory and its applications one needs to study
variables of the form $S=\sup_{t\in T}|\sum_{i=1}^nt_iX_i|$, where $X_1,\ldots,X_n$ are independent
random variables  and $T$ is a non-empty subset of $\R^n$. In particular it is of interest to estimate tails of $S$. Such estimates are
strictly related to bounds for $L_p$-norms of $S$ (i.e. $\|S\|_p:=(\Ex|S|^p)^{1/p}$) 
for $p\ge 1$ (see Corollary \ref{cor:weakstrongtail} and its proof in Section \ref{sec:wst} below).
There is a trivial lower estimate:
\begin{equation}
\label{eq:lowtrivial}
\biggl(\Ex\sup_{t\in T}\Bigl|\sum_{i=1}^nt_iX_i\Bigr|^p\biggr)^{1/p}
\ge \max\biggl\{
\Ex\sup_{t\in T}\Bigl|\sum_{i=1}^nt_iX_i\Bigr|,
\sup_{t\in T}\biggl(\Ex\Bigl|\sum_{i=1}^nt_iX_i\Bigr|^p\biggr)^{1/p}\biggr\}.
\end{equation}
It turns out that in some situations this obvious lower bound may be reversed, i.e.
there exist numerical constants $C_1$ and $C_2$ such that 
\begin{equation}
\label{eq:weakstrongmom}
\biggl(\Ex\sup_{t\in T}\Bigl|\sum_{i=1}^nt_iX_i\Bigr|^p\biggr)^{1/p}
\le
C_1\Ex\sup_{t\in T}\Bigl|\sum_{i=1}^nt_iX_i\Bigr|
+C_2\sup_{t\in T}\biggl(\Ex\Bigl|\sum_{i=1}^nt_iX_i\Bigr|^p\biggr)^{1/p}.
\end{equation}
This is for example the case (with $C_1=1$), when $X_i$ are normally distributed. This
is an easy consequence of the Gaussian concentration (cf.\ Chapter 3 of \cite{LeTal}).
Dilworth and Montgomery-Smith \cite{DMS} established  the inequality \eqref{eq:weakstrongmom} for $X_i$ being symmetric
Bernoulli random variables. This result was generalized in \cite{La_logconctails} to symmetric variables
with logarithmically concave tails and in \cite[Theorem 2.3]{LaTk} to symmetric random variables such that
$\|X_i\|_{q} \le C\frac{p}{q} \alpha \|X_i\|_p$ for all $q\ge p\ge 2$.

The main result of this paper is the following.

\begin{theorem}
\label{thm:p2p_to_paour} 
Let $X_1,\ldots,X_n$ be independent mean zero random variables with finite moments such that 
\begin{equation}
\label{eq:main_thm_assumpt}
\|X_i\|_{2p} \le \alpha \|X_i\|_p \qquad \mbox{for every $p\ge 2$ and $i=1,\ldots,n$},
\end{equation}
where $\alpha$ is a finite positive constant.
Then for every $p\ge 1$ and every non-empty set $T\subset \R^n$ we have
\begin{equation} 
\label{eq:p2pweakstrong}
\biggl(\Ex  \sup_{t\in T} \Bigl|\sum_{i=1}^n t_iX_i \Bigr|^p \biggr)^{1/p} 
\le C(\alpha) \Biggl[  \Ex  \sup_{t\in T} \Bigl|\sum_{i=1}^n t_iX_i\Bigr|  
+  \sup_{t\in T} \biggl(\Ex   \Bigl|\sum_{i=1}^n t_i X_i \Bigr|^p \biggr)^{1/p} \Biggr] ,
\end{equation}
where $C(\alpha)$ is a constant which depends only on $\alpha$.
\end{theorem}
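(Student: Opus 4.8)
The plan is to run a chaining argument for the stochastic process $\bigl(\sum_{i=1}^n t_iX_i\bigr)_{t\in T}$, using \eqref{eq:main_thm_assumpt} to control the increments strongly enough that the chaining goes through at the level of $p$-th moments rather than merely in $L^2$. Several reductions come first. Symmetrising (replace each $X_i$ by $X_i-X_i'$ with $X_i'$ an independent copy) changes each side of \eqref{eq:p2pweakstrong} by at most a universal factor, using $\Ex X_i=0$ and Jensen, and replaces $\alpha$ by $2\alpha$ in \eqref{eq:main_thm_assumpt}; so we may assume the $X_i$ symmetric. Iterating \eqref{eq:main_thm_assumpt} gives polynomial moment growth, $\|X_i\|_q\le (q/p)^{\log_2\alpha}\|X_i\|_p$ for $q\ge p\ge 2$, so each $X_i$ has sub-Weibull type tails with exponent $1/\log_2\alpha$. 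The Paley--Zygmund inequality applied to the second and fourth moments lets us reduce the range $1\le p<2$ to $p=2$: the bound $\|X_i\|_4\le\alpha\|X_i\|_2$ forces $\|X_i\|_2\le c(\alpha)\|X_i\|_1$, and computing $\Ex|\sum_i t_iX_i|^4$ explicitly (independence plus mean zero) gives $\|\sum_i t_iX_i\|_2\le C(\alpha)\|\sum_i t_iX_i\|_1\le C(\alpha)\|\sum_i t_iX_i\|_p$, so for $1\le p<2$ the $p=2$ case of \eqref{eq:p2pweakstrong} already implies the assertion. Hence from now on $p\ge 2$. Finally, feeding the moment growth of the $X_i$ into a universal-constant two-sided estimate for moments of sums of independent mean-zero variables (of Rosenthal/Latała type) shows that \emph{every} linear combination $\sum_i a_iX_i$ has moments that grow at most at a mixed sub-Gaussian/sub-Weibull rate: $\|\sum_i a_iX_i\|_q\le C(\alpha)\bigl(\sqrt q\,\tau_2(a)+q^{\log_2\alpha}\tau_\infty(a)\bigr)$ for $q\ge 2$, with $\tau_2,\tau_\infty$ appropriate moment-defined seminorms. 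This is the property that drives the chaining.

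Now fix $p\ge 2$, put $m=\Ex\sup_{t\in T}|\sum_i t_iX_i|$ and $\delta_p=\sup_{t\in T}\|\sum_i t_iX_i\|_p$, and choose an admissible sequence of nets $(T_k)_{k\ge 0}$ of $T$ with $|T_0|=1$, $|T_k|\le 2^{2^k}$, nearly optimal for the chaining functionals of $T$ in the pair of seminorms above; write $\pi_k(t)$ for a nearest point of $T_k$. Let $k_0$ be least with $2^{2^{k_0}}\ge e^{p}$ (so $2^{k_0}\asymp p$) and split the chain at level $k_0$:
\[
\sum_i t_iX_i=\sum_i\pi_{k_0}(t)_iX_i+\sum_{k\ge k_0}\sum_i\bigl(\pi_{k+1}(t)-\pi_k(t)\bigr)_iX_i .
\]
For the head, a plain union/$\ell_p$ bound gives $\bigl\|\sup_{t}\bigl|\sum_i\pi_{k_0}(t)_iX_i\bigr|\bigr\|_p\le|T_{k_0}|^{1/p}\delta_p\le e^{2}\,\delta_p$. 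For the $k$-th link, $k\ge k_0$, the increments $\pi_{k+1}(t)-\pi_k(t)$ take at most $|T_{k+1}|\le 2^{2^{k+1}}$ values, and since $2^{k+1}\ge p$ one may bound the supremum in $L^{2^{k+1}}$; the union cost $\bigl(2^{2^{k+1}}\bigr)^{1/2^{k+1}}=2$ is absorbed, leaving only $2\max_t\bigl\|\sum_i(\pi_{k+1}(t)-\pi_k(t))_iX_i\bigr\|_{2^{k+1}}$. Inserting the mixed-rate moment bound from the previous paragraph and summing over $k\ge k_0$ produces (a constant multiple of) the generic-chaining functionals $\gamma_2$ and $\gamma_{1/\log_2\alpha}$ of $T$ in the seminorms $\tau_2,\tau_\infty$; by the two-sided (majorising-measure type) chaining estimate for processes with such increments these are at most $C(\alpha)\,\Ex\sup_t\bigl|\sum_i t_iX_i-\sum_i\pi_{k_0}(t)_iX_i\bigr|\le 2C(\alpha)\,m$. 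Adding the two contributions yields \eqref{eq:p2pweakstrong}.

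The main obstacle is exactly the input ``$\gamma_2+\gamma_{1/\log_2\alpha}\le C(\alpha)\,\Ex\sup$'', i.e.\ the minoration half of the two-sided chaining estimate for processes whose increments obey the mixed sub-Gaussian/sub-Weibull moment bound, \emph{with a constant depending only on $\alpha$}; together with the transfer of \eqref{eq:main_thm_assumpt} from the coordinates to arbitrary linear combinations (the Rosenthal/Latała moment-formula step, again with an $\alpha$-only constant), this is the technical heart of the proof. It is worth stressing why the full chaining is needed rather than an off-the-shelf concentration inequality: Bousquet's/Talagrand's bound for empirical processes would produce a term $\sqrt p\,\sup_t\|\sum_i t_iX_i\|_2$, which is \emph{not} controlled by $m+\delta_p$ in general -- already for $T=\{e_1\}$ and symmetric Bernoulli variables one has $m=\delta_p=1$ while $\sqrt p\to\infty$ -- so the argument must generate the weak $p$-th moment $\delta_p$ directly, which is precisely the role of the head of the chain. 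The treatment of $1\le p<2$ and the final bookkeeping of the constant $C(\alpha)$ are then routine.
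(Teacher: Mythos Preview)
Your outline has a genuine gap at the minoration step, and the inequality you invoke there is not merely hard to prove---it is false.

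You need
\[
\gamma_2(T,\tau_2)+\gamma_{1/\log_2\alpha}(T,\tau_\infty)\le C(\alpha)\,\Ex\sup_{t\in T}\Bigl|\sum_i t_iX_i\Bigr|,
\]
which you describe as ``the two-sided (majorising-measure type) chaining estimate for processes with such increments''. But mixed sub-Gaussian/sub-Weibull \emph{upper} bounds on increment moments only give the chaining upper bound $\Ex\sup\le C(\gamma_2+\gamma_r)$; the reverse direction is special to Gaussian (and a handful of other canonical) processes and does not follow from the hypothesis \eqref{eq:main_thm_assumpt}. Already Rademacher variables $\ve_i$---which satisfy \eqref{eq:main_thm_assumpt} with every $\alpha\ge 1$---furnish a counterexample: for $T$ the unit ball of $\ell_1^n$ one has $\Ex\sup_{t\in T}\sum_i t_i\ve_i=\Ex\max_i|\ve_i|=1$, while $\gamma_2(T,\|\cdot\|_2)\asymp\sqrt{\log n}$. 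The theorem is of course true for Rademachers (Dilworth--Montgomery-Smith), so the failure is in your intermediate bound, not in the target inequality; the route through the $\gamma$-functionals is simply too lossy. You flag this minoration as ``the main obstacle'' and ``the technical heart'', but it is not a lemma that can be supplied: the approach itself does not go through.

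The paper avoids minoration entirely. It first proves \eqref{eq:p2pweakstrong} for symmetric $X_i$ and \emph{unconditional} $T$ (Corollary~\ref{cor:p2p_to_paour}): writing $X_i=|Y_i|^{1/r}\sgn Y_i$ with $r=1/\log_2\alpha$, the $Y_i$ have linear moment growth $\|Y_i\|_q\le C(\alpha)(q/p)\|Y_i\|_p$, a regime covered by the cited \cite[Theorem~2.3]{LaTk}; a H\"older-duality trick (Proposition~\ref{thm:Y_to_X}) transfers the inequality back from $Y$ to $X$, and this trick requires $T$ unconditional in an essential way. The passage to arbitrary $T$ is done by Bernoulli randomisation: writing $\sum_i t_iX_i=\sum_i t_i\ve_i|X_i|$ and applying the known Bernoulli weak--strong comparison in the inner $\ve$-expectation produces suprema over auxiliary index sets $T_1,T_2$ that \emph{are} unconditional, to which the first step applies; Proposition~\ref{prop:est_bern} handles the remaining term $\Ex_X\sup_{t}(\Ex_\ve|\sum_i t_i\ve_iX_i|^p)^{1/p}$ via Talagrand's Bernoulli contraction principle. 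No lower bound on any $\gamma$-functional is ever required.
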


It turns out that Theorem \ref{thm:p2p_to_paour} may be reversed  in the i.i.d. case.
	
\begin{theorem}
\label{thm:paour_to_p2p}
Let $X_1, X_2,\ldots $ be i.i.d. random variables. Assume that there exists a constant $L$ such that
for every $p\ge 1$, every $n$ and every  non-empty set $T\subset \R^n$ we have
\begin{equation}\label{thm_assump:paour_to_p2p}
\biggl(\Ex  \sup_{t\in T} \Bigl|\sum_{i=1}^n t_iX_i \Bigr|^p \biggr)^{1/p} 
\le L \Biggl[  \Ex  \sup_{t\in T} \Bigl|\sum_{i=1}^n t_iX_i\Bigr|  
+  \sup_{t\in T} \biggl(\Ex   \Bigl|\sum_{i=1}^n t_i X_i \Bigr|^p \biggr)^{1/p} \Biggr] .
\end{equation}
Then
\begin{equation}
\label{eq:comp_p2p}
\|X_1\|_{2p} \le \alpha(L) \|X_1\|_p \qquad \mbox{for } p\ge 2,
\end{equation}
where $\alpha(L)$ is a constant which depends only on $L\ge 1$.
\end{theorem}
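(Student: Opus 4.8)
The plan is to feed the simplest possible test sets into the assumed inequality. Fix $n$ and apply \eqref{thm_assump:paour_to_p2p} with $T=\{e_1,\dots,e_n\}$ and with an arbitrary exponent $r\ge 1$ in place of $p$. Since the coordinates are i.i.d.\ we have $\sup_{t\in T}|\sum_it_iX_i|=\max_{1\le i\le n}|X_i|$ and $\sup_{t\in T}\|\sum_it_iX_i\|_r=\|X_1\|_r$, so the hypothesis becomes, for all $n\ge 1$ and all $r\ge 1$,
\begin{equation}\label{eq:reduced-maxima}
\Bigl\|\max_{1\le i\le n}|X_i|\Bigr\|_r\le L\Bigl(\Ex\max_{1\le i\le n}|X_i|+\|X_1\|_r\Bigr).
\end{equation}
Thus the whole problem reduces to a statement about maxima of i.i.d.\ nonnegative variables $Z_i:=|X_i|$. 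To exploit it I would insert the standard two-sided estimates for such maxima: for any $u\ge 0$, from $\max_{i\le n}Z_i\le u+\sum_{i\le n}(Z_i-u)_+$ one gets $\Ex\max_{i\le n}Z_i\le u+n\,\Ex[Z_1\ind_{\{Z_1>u\}}]$, and in the other direction the events $A_j=\{Z_j>u,\ Z_i\le u\ \forall i\ne j\}$ are disjoint with $\max_iZ_i=Z_j$ on $A_j$, so $\Ex\max_{i\le n}Z_i^{\,r}\ge n\,\Ex[Z_1^{\,r}\ind_{\{Z_1>u\}}]\,\Pr(Z_1\le u)^{\,n-1}$.

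Combining these with \eqref{eq:reduced-maxima}: whenever $u$ and $n$ satisfy $n\Pr(Z_1>u)\le\tfrac12$ (so $\Pr(Z_1\le u)^{\,n-1}\ge\tfrac12$) we obtain
\begin{equation}\label{eq:combined}
n^{1/r}\Bigl(\tfrac12\Ex[|X_1|^{r}\ind_{\{|X_1|>u\}}]\Bigr)^{1/r}\le L\Bigl(u+n\,\Ex[|X_1|\ind_{\{|X_1|>u\}}]+\|X_1\|_r\Bigr),
\end{equation}
with only universal constants. It is important here never to raise \eqref{eq:reduced-maxima} to a power, since a factor such as $2^{r}$ would ruin the claim that the final constant depends on $L$ alone; the whole argument must be carried out through $r$-th roots.

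Now fix $p\ge 2$; after normalising $\|X_1\|_p=1$ we may assume $A:=\|X_1\|_{2p}<\infty$ (otherwise \eqref{eq:comp_p2p} is vacuous) and must bound $A$ by a function of $L$. I would run \eqref{eq:combined} with $r=p$ — so that the harmless term $\|X_1\|_r=1$, rather than the quantity $\|X_1\|_{2p}$ we are estimating, appears on the right — and choose the truncation level $u$ at the scale carrying a fixed proportion of $\Ex|X_1|^{2p}=A^{2p}$. The point is that since $1=\Ex|X_1|^{p}\ge v^{-p}\Ex[|X_1|^{2p}\ind_{\{|X_1|\le v\}}]$, any scale that carries a definite share of the $2p$-th moment is of order $A^{2}$; this forces $u\lesssim A^{2}$, makes $n\,\Ex[|X_1|\ind_{\{|X_1|>u\}}]\lesssim 1$ and $n\,\Ex[|X_1|^{p}\ind_{\{|X_1|>u\}}]\gtrsim A^{2p-2}$ for an integer $n$ with $n\Pr(|X_1|>u)\asymp A^{-2}$. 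Feeding this into \eqref{eq:combined} gives $A^{2-2/p}\lesssim L$, and since $p\ge 2$ we have $2-2/p\ge 1$, whence $A\le C(L)$, i.e.\ \eqref{eq:comp_p2p} with $\alpha(L)=C(L)$. (For orientation, in the extremal two-point case $\Pr(|X_1|=A^2)=A^{-2p}$ one may take $u\downarrow 0$ and $n=\lfloor A^{2(p-1)}\rfloor$, all the quantities above become exact, and \eqref{eq:combined} reads literally $A^{2-2/p}\le 4L$.)

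The main obstacle is exactly the choice of scale when the mass of $|X_1|^{2p}$ is not concentrated at a single level but smeared over a range of dyadic scales, or trails off in a slowly decaying tail — in which case the naive choice of $u$ drives $\Pr(|X_1|>u)$ so low that the conditional expectations in \eqref{eq:combined} become uncontrolled and the inequality turns trivial. To get around this I would first use \eqref{eq:combined} (with a suitable exponent) applied to the upper tail itself to show that essentially no mass of $|X_1|^{2p}$ sits above a level of order $A^{2}$; after this truncation one may pick two quantiles $u<v\lesssim A^{2}$ so that $\Ex[|X_1|^{2p}\ind_{\{u<|X_1|\le v\}}]\ge\tfrac14\Ex|X_1|^{2p}$, bound the left side of \eqref{eq:combined} below via $\Ex[|X_1|^{p}\ind_{\{u<|X_1|\le v\}}]\ge v^{-p}\Ex[|X_1|^{2p}\ind_{\{u<|X_1|\le v\}}]$, and control the right side using $u\le v$, the Markov bound $\Pr(|X_1|>u)\le u^{-p}$, and $v^{p}\Pr(u<|X_1|\le v)\le\Ex[|X_1|^{p}\ind_{\{|X_1|>u\}}]\le 1$. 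Everything else is soft; the care is entirely in this bookkeeping and in keeping every constant independent of $p$.
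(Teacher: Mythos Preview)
Your reduction to the inequality for maxima, \eqref{eq:reduced-maxima}, and the two-sided bounds for $\max_{i\le n}|X_i|$ are exactly what the paper uses. The divergence is in what you do next, and it creates an obstacle the paper never meets.

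The paper applies \eqref{eq:reduced-maxima} at exponent $r=2p$ (not $p$) with a \emph{deterministic} $n=\lfloor(4L)^{2p}\rfloor+1$, and sets the truncation level to the Markov scale $A=n^{1/p}\|X_1\|_p$, so that $\Pr(|X_1|\ge A)\le 1/n$ automatically. The lower bound for $\Ex\max_i|X_i|^{2p}$ then reads $\ge\frac{n}{3}(\|X_1\|_{2p}^{2p}-A^{2p})$, while the upper bound for $\Ex\max_i|X_i|$ is $\le 2A$. Since $\|X_1\|_{2p}$ now appears on both sides of \eqref{eq:reduced-maxima} --- with coefficient roughly $n^{1/(2p)}\sim 4L$ on the left and $L$ on the right --- one simply absorbs and is done in a single line, obtaining $\alpha(L)\le CL^2$. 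No adaptive choice of scale, no case analysis on how the mass of $|X_1|^{2p}$ is distributed.

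By applying the hypothesis at $r=p$ instead, you trade the easy absorption step for the need to manufacture $\|X_1\|_{2p}$ on the left from $\Ex[|X_1|^{p}\ind_{\{|X_1|>u\}}]$. That forces the adaptive choice of $u$ and exactly the ``main obstacle'' you describe: if the $2p$-th moment is carried by a slowly decaying tail, there is no single $u$ for which $\Ex[|X_1|^{p}\ind_{\{|X_1|>u\}}]$ is both appreciable and comparable to $v^{-p}\Ex[|X_1|^{2p}\ind_{\{|X_1|>u\}}]$ with a controlled $v$. Your proposed fix (a preliminary application of \eqref{eq:combined} at another exponent to kill the far tail, then a two-quantile slicing) is plausible but is not carried out, and the first step --- ``show essentially no mass of $|X_1|^{2p}$ sits above a level of order $A^2$'' --- is precisely an instance of the very moment comparison you are trying to prove; to make it non-circular you would effectively have to run the paper's $r=2p$ argument anyway. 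So the gap is real, though the remedy is simple: switch to exponent $2p$ and take $n$ as a fixed power of $L$, and the whole ``obstacle'' paragraph disappears.
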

	
It will be clear from the proof of Theorem \ref{thm:paour_to_p2p} that it suffices to assume 
\eqref{thm_assump:paour_to_p2p} for $T=\{\pm e_j : j\in\{ 1,\ldots, n\}\}$ only,
where $\{e_1, \ldots, e_n  \}$ is the canonical basis of $\R^n$.

The comparison of weak and strong moments \eqref{eq:p2pweakstrong} yields also a deviation inequality 
for $\sup_{t\in T}|\sum_{i=1}^nt_iX_i|$.

\begin{corollary}
\label{cor:weakstrongtail}
Assume $X_1, X_2, \ldots$ satisfy the assumptions of Theorem \ref{thm:p2p_to_paour}. Then for any $u\ge 0$
 and any non-empty set $T$ in $\er^n$,
\begin{multline}
\label{eq:weakstrongtail}
\Pr\biggl(\sup_{t\in T}\Bigl|\sum_{i=1}^nt_iX_i\Bigr|\ge
C_1(\alpha)\biggl[u+\Ex\sup_{t\in T}\Bigl|\sum_{i=1}^nt_iX_i\Bigr| \biggr]\biggr)
\\
\le 
C_2(\alpha)\sup_{t\in T}\Pr\biggl(\Bigl|\sum_{i=1}^nt_iX_i\Bigr|\ge u\biggr),
\end{multline}
where constants $C_1(\alpha)$ and $C_2(\alpha)$ depend only on the constant $\alpha$ in
\eqref{eq:main_thm_assumpt}.
\end{corollary}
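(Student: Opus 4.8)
I would deduce \eqref{eq:weakstrongtail} from the moment bound \eqref{eq:p2pweakstrong} by Chebyshev's inequality at a well-chosen exponent. Write $Z_t=\sum_{i=1}^n t_iX_i$, $S=\sup_{t\in T}|Z_t|$, $M=\Ex S$ and $\beta=\sup_{t\in T}\Pr(|Z_t|\ge u)$. One may assume $M<\infty$, $u>0$ and $\beta>0$ (if any of these fails the inequality is either vacuous or follows at once from $S\le u$ a.s., after reducing $T$ to a finite set). Moreover there is a threshold $\beta_0=\beta_0(\alpha)\in(0,1)$ such that the case $\beta\ge\beta_0$ is trivial: then the right-hand side of \eqref{eq:weakstrongtail} is at least $C_2(\alpha)\beta_0\ge 1$ as soon as $C_2(\alpha)\ge 1/\beta_0$, and there is nothing to prove. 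So assume $0<\beta<\beta_0$ and put $p:=\log(1/\beta)$, which will be as large as we need by choosing $\beta_0$ small. The whole argument then rests on the estimate
\begin{equation*}
W_p:=\sup_{t\in T}\|Z_t\|_p\le c(\alpha)\,u .
\end{equation*}
Granting it, \eqref{eq:p2pweakstrong} gives $\|S\|_p\le C(\alpha)\bigl(M+W_p\bigr)\le C(\alpha)(1+c(\alpha))(M+u)$, and Chebyshev's inequality applied to $S^p$ yields $\Pr\bigl(S\ge e\,C(\alpha)(1+c(\alpha))(M+u)\bigr)\le e^{-p}=\beta$, which together with the two trivial cases is \eqref{eq:weakstrongtail} with $C_1(\alpha)=e\,C(\alpha)(1+c(\alpha))$ and $C_2(\alpha)=1/\beta_0$.

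To prove $W_p\le c(\alpha)u$ I would first record the (standard, but crucial) fact that the regularity hypothesis \eqref{eq:main_thm_assumpt} is inherited by every linear form: there is $\alpha'=\alpha'(\alpha)$ with $\|Z_t\|_{2q}\le\alpha'\|Z_t\|_q$ for all $q\ge 1$ and all $t\in\er^n$ (for a single variable this is \eqref{eq:main_thm_assumpt} itself; for sums it follows from the theory of moments of sums of independent random variables with regular moments, and is presumably already available from the proof of Theorem \ref{thm:p2p_to_paour}; the passage from $q\ge 2$ to $q\ge 1$ costs only one Paley--Zygmund step). The point of $\alpha'$-regularity is that it forces the $L_p$-norm of $Z_t$ to be comparable with a quantile: setting $N_t(p):=\inf\{v\ge 0:\Pr(|Z_t|>v)\le e^{-p}\}$, I claim that for $p\ge q_\alpha$, where $q_\alpha=q_\alpha(\alpha)$ is a sufficiently large constant, one has $\|Z_t\|_p\le c(\alpha)\,N_t(p)$. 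Indeed, by the Paley--Zygmund inequality together with $\|Z_t\|_{2q}\le\alpha'\|Z_t\|_q$,
\begin{equation*}
\Pr\Bigl(|Z_t|>\tfrac12\|Z_t\|_q\Bigr)\ge\tfrac14(\alpha')^{-2q}>e^{-p}\qquad\text{for }q:=p/q_\alpha\ge 1
\end{equation*}
(this is where $q_\alpha$ must be large, of the order of $\log\alpha'$), hence $\tfrac12\|Z_t\|_{p/q_\alpha}\le N_t(p)$; combining with the doubling bound $\|Z_t\|_p\le(\alpha')^{\lceil\log_2 q_\alpha\rceil}\|Z_t\|_{p/q_\alpha}$ gives the claim with $c(\alpha)=2(\alpha')^{\lceil\log_2 q_\alpha\rceil}$. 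Now choose $\beta_0:=e^{-q_\alpha}$, so that $p=\log(1/\beta)>q_\alpha$ in the non-trivial case. For $t\in T$ the hypothesis $\Pr(|Z_t|>u)\le\Pr(|Z_t|\ge u)\le\beta=e^{-p}$ forces $N_t(p)\le u$, hence $\|Z_t\|_p\le c(\alpha)\,N_t(p)\le c(\alpha)u$; taking the supremum over $t\in T$ gives $W_p\le c(\alpha)u$, as required.

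The only genuinely substantial step is thus the inheritance of the moment-regularity condition \eqref{eq:main_thm_assumpt} by the linear forms $Z_t$ with a constant depending only on $\alpha$; once that is in hand — quoted from the earlier development, or proved directly from two-sided moment estimates for sums of independent random variables — the remainder of the deduction is an elementary combination of the Paley--Zygmund and Chebyshev inequalities with Theorem \ref{thm:p2p_to_paour}. The reduction to finite subsets of $T$ made above is harmless, since $S$, $M$ and $\beta$ are the suprema over $T$ of the corresponding quantities over finite subsets and the bound we obtain is uniform in the chosen finite subset.
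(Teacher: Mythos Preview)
Your argument is correct and follows essentially the same route as the paper: moment regularity of the linear forms $Z_t$, Paley--Zygmund to convert this into a lower tail bound, then Theorem~\ref{thm:p2p_to_paour} together with Chebyshev at a well-chosen exponent. The only organisational difference is that the paper splits into three cases and defines $p$ via the moments (as $\sup\{q:\sup_t\|Z_t\|_{q/C_4(\alpha)}\le 2u\}$), whereas you define $p=\log(1/\beta)$ directly from the tail and then bound the moments; these are dual ways of executing the same idea.

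The one step you flag as ``genuinely substantial''---that the moment regularity \eqref{eq:main_thm_assumpt} passes to the linear forms $Z_t$ with a constant depending only on $\alpha$---is indeed proved in the paper as a separate lemma (inequality \eqref{eq:p2qlincomb}), by a hypercontractivity argument. So your appeal to ``the earlier development'' is justified; once \eqref{eq:p2qlincomb} is in hand (equivalently, once $\|Z_t\|_{2q}\le\alpha'\|Z_t\|_q$ for $q\ge 2$), your Paley--Zygmund/quantile computation and the final Chebyshev step go through exactly as written. Note that you never actually need regularity for $1\le q<2$: simply enlarge $q_\alpha$ so that $p/q_\alpha\ge 2$ in the non-trivial case.
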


Another consequence of the main theorem is the following Khintchine-Kahane type inequality.

\begin{corollary}
\label{cor:KahKh}
Assume $X_i$, $1\le i\le n$ satisfy the assumptions of Theorem \ref{thm:p2p_to_paour}. Then for any 
$p\ge q\ge 2$ and any non-empty set $T$ in $\er^n$ we have,
\[
\biggl(\Ex  \sup_{t\in T} \Bigl|\sum_{i=1}^n t_iX_i \Bigr|^p \biggr)^{1/p}
\le C_3(\alpha)\biggl(\frac{p}{q}\biggr)^{\max\{ 1/2,\log_2\alpha\}}
\biggl(\Ex  \sup_{t\in T} \Bigl|\sum_{i=1}^n t_iX_i \Bigr|^q \biggr)^{1/q}
\]
where a constant $C_3(\alpha)$ depend only on the constant $\alpha$ in
\eqref{eq:main_thm_assumpt}.
\end{corollary}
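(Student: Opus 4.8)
The plan is to deduce the Khintchine–Kahane type inequality from Theorem \ref{thm:p2p_to_paour} applied not to the original variables but to the tensored/blocked variables, a standard trick for upgrading a ``first moment plus weak $p$-th moment'' bound into a clean $L_p$-versus-$L_q$ comparison. First I would observe that it suffices to treat the case $q=2$ after a reduction: indeed, since the assumption \eqref{eq:main_thm_assumpt} is homogeneous, one may iterate the doubling of the exponent, so the general $p\ge q$ case follows from the $q=2$ case by a dyadic chaining argument, losing only a constant factor and keeping the exponent $\max\{1/2,\log_2\alpha\}$ intact. For the key case, the point is to control the two terms on the right-hand side of \eqref{eq:p2pweakstrong} separately: the weak $p$-th moment $\sup_{t\in T}\|\sum t_iX_i\|_p$ and the (strong) first moment $\Ex\sup_{t\in T}|\sum t_iX_i|$.

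Next I would bound the weak $p$-th moment. Here I want $\sup_{t\in T}\|\sum_i t_iX_i\|_p \le (p/q)^{\max\{1/2,\log_2\alpha\}} \sup_{t\in T}\|\sum_i t_iX_i\|_q$ up to a constant depending on $\alpha$. This is a one-dimensional statement about a single random variable $Y=\sum_i t_iX_i$, and it should follow from the moment growth condition \eqref{eq:main_thm_assumpt} for the $X_i$ by combining it with a symmetrization/Rosenthal-type estimate: the hypothesis $\|X_i\|_{2p}\le\alpha\|X_i\|_p$ forces $\|X_i\|_p \le C\alpha^{\log_2(p/2)}\|X_i\|_2 \le C p^{\log_2\alpha}\|X_i\|_2$, and combined with the (sub-Gaussian in $p$) Rosenthal/Khintchine growth coming from independence and mean zero, one gets the claimed $(p/q)^{\max\{1/2,\log_2\alpha\}}$ growth of $p$-th norms of the sum, uniformly in $t\in T$.

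The remaining term, the strong first moment $\Ex\sup_{t\in T}|\sum_i t_iX_i|$, must be bounded by a constant times $(\Ex\sup_{t\in T}|\sum_i t_iX_i|^q)^{1/q}$; but this is trivial by Jensen/Lyapunov since $q\ge 1$. Plugging both bounds into \eqref{eq:p2pweakstrong} and using that the weak $p$-th moment is itself at most $(\Ex\sup_t|\sum t_iX_i|^q)^{1/q}$ times the growth factor — because $\sup_{t\in T}\|\sum t_iX_i\|_q \le (\Ex\sup_{t\in T}|\sum t_iX_i|^q)^{1/q}$ trivially — yields the corollary. The main obstacle I expect is making the exponent $\max\{1/2,\log_2\alpha\}$ sharp rather than some larger power: one must carefully track how the doubling condition \eqref{eq:main_thm_assumpt}, iterated $\log_2(p/q)$ times, produces exactly $(p/q)^{\log_2\alpha}$, and argue that when $\alpha$ is small (i.e. $\log_2\alpha<1/2$, the sub-Gaussian regime) the central-limit-type $(p/q)^{1/2}$ growth of the $p$-th norm of a sum of independent mean-zero variables takes over and dominates; handling the transition between these two regimes uniformly in $T$ and in the dimension $n$ is the delicate part, and it is presumably where a careful application of Theorem \ref{thm:p2p_to_paour} (with $p$ replaced by values along a dyadic sequence between $q$ and $p$) is needed.
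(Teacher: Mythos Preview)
Your core outline is correct and matches the paper's proof: apply Theorem~\ref{thm:p2p_to_paour} once, bound the first-moment term trivially by $(\Ex\sup_{t\in T}|\sum t_iX_i|^q)^{1/q}$ via Jensen, and bound the weak $p$-th moment by the weak $q$-th moment times the growth factor $(p/q)^{\max\{1/2,\log_2\alpha\}}$, then use the trivial inequality $\sup_{t\in T}\|\sum t_iX_i\|_q\le(\Ex\sup_{t\in T}|\sum t_iX_i|^q)^{1/q}$.

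However, the surrounding apparatus you propose is unnecessary and partly misleading. There is no need to reduce to $q=2$, no dyadic chaining, and no reason to apply Theorem~\ref{thm:p2p_to_paour} along a sequence of exponents between $q$ and $p$: the theorem is applied exactly once, with the given $p$, and the exponent $q$ enters only through the one-dimensional moment comparison and the trivial lower bound~\eqref{eq:lowtrivial}. Drop those detours.

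The one genuine piece of work you correctly flag as the ``main obstacle'' is the scalar estimate
\[
\Bigl\|\sum_{i=1}^n t_iX_i\Bigr\|_p\le C(\alpha)\Bigl(\frac{p}{q}\Bigr)^{\max\{1/2,\log_2\alpha\}}\Bigl\|\sum_{i=1}^n t_iX_i\Bigr\|_q,\qquad p\ge q\ge 2,
\]
with the sharp exponent. Your sketch via ``Rosenthal-type estimates'' is vague and would not obviously give the clean $\max\{1/2,\log_2\alpha\}$; the paper isolates this as a separate lemma (inequality~\eqref{eq:p2qlincomb}) and proves it by a hypercontractivity argument: one reduces to even integer exponents, symmetrizes, and shows that for a symmetric $Y$ with $\|Y\|_{2p}\le\alpha\|Y\|_p$ one has $\|1+\sigma Y\|_{2k}\le\|1+Y\|_{2l}$ for the appropriate shrinking factor $\sigma\sim(l/k)^{\max\{1/2,\log_2\alpha\}}$. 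That is where the two regimes ($\sqrt{p/q}$ versus $(p/q)^{\log_2\alpha}$) are handled simultaneously, not via any iteration of Theorem~\ref{thm:p2p_to_paour}.
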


\medskip	

We postpone  proofs of the above results and first  present a number of remarks and open questions.

\begin{remark}
Exponent $\max\{1/2,\log_2\alpha\}$ in Corollary \ref{cor:KahKh} is optimal. 
\end{remark}

Indeed, since
$\|g\|_p\sim \sqrt{p/e}$ as $p\to\infty$ one cannot go below $1/2$ by the central limit theorem.

To see that $\log_2\alpha$ term cannot be improved it is enough to consider $\alpha>\sqrt{2}$. 
Let $r=1/\log_2\alpha\in (0,2)$ and let $X$ be a symmetric random variable  given by 
$\Pr (|X|\ge t)=e^{-t^r}$ (with $2>r>0$), i.e. $X=|\mathcal{E}|^{1/r}\sgn \mathcal{E}$, where 
$\mathcal{E}$ has the symmetric exponential distribution.  By Stirling's formula 
$\Gamma(x+1) = (\frac{x}{e})^x \sqrt{2\pi x} e^{f(x)}$ with $f(x)\in(0,1/12)$ for $x\ge 1$, so
for $p\geq 2$,
\begin{equation*}
\frac{\|X\|_{2p}}{\|X\|_p}=\frac{\Gamma \bigl(\frac{2p}r+1\bigr)^{1/(2p)}}{\Gamma\bigl(\frac pr +1\bigr)^{1/p}} 
\le  2^{1/r} \biggl(\frac{r}{\pi p}\biggr)^{1/(4p)} e^{1/(24p)} \le 2^{1/r}=\alpha. 
\end{equation*}

Moreover, $\|X\|_p \sim (\frac p{er})^{1/r}$ for $p\to\infty$, so the assertion of Corollary \ref{cor:KahKh} cannot hold with any exponent better than $\log_2\alpha$.

\begin{remark}
If the variables $X_i$ are symmetric then the
term $\Ex  \sup_{t\in T} \left|\sum_{i=1}^n t_iX_i\right|$ in \eqref{eq:p2pweakstrong} may be replaced by
$\Ex  \sup_{t\in T} \sum_{i=1}^n t_iX_i$. 
\end{remark}

\begin{proof}
Let $s$ be any point in $T$. Then $T\subset T-T+s$, so by the triangle inequality
\[
\biggl(\Ex  \sup_{t\in T} \Bigl|\sum_{i=1}^n  t_iX_i  \Bigr|^p \biggr)^{1/p}  
\le \biggl(\Ex  \sup_{t\in T-T} \Bigl|\sum_{i=1}^n t_iX_i \Bigr|^p \biggr)^{1/p} +  
\biggr(\Ex   \Bigl|\sum_{i=1}^n s_i X_i \Bigr|^p \biggr)^{1/p}.
\]
Estimate \eqref{eq:p2pweakstrong} applied to the set $T-T$ yields
\begin{align*}
\biggl(\Ex  \sup_{t\in T-T} \Bigl|\sum_{i=1}^n t_iX_i \Bigr|^p \biggr)^{1/p}
&\le
C(\alpha)\Biggl[
\Ex  \sup_{t\in T-T} \Bigl|\sum_{i=1}^n t_iX_i\Bigr|
+ \sup_{t\in T-T}\biggl(\Ex   \Bigl|\sum_{i=1}^n t_i X_i \Bigr|^p \biggr)^{1/p}
\Biggr].
\end{align*}

The set $T-T$ is symmetric, so
\[
\Ex  \sup_{t\in T-T} \Bigl|\sum_{i=1}^n t_iX_i\Bigr|
=\Ex  \sup_{t\in T-T} \sum_{i=1}^n t_iX_i\le 2 \Ex  \sup_{t\in T}\sum_{i=1}^n t_iX_i,
\]
where the last estimate follows, since $(X_i)_{i=1}^n$ and $(-X_i)_{i=1}^n$ are equally distributed.
Moreover,
\[
\sup_{t\in T-T}\biggl(\Ex   \Bigl|\sum_{i=1}^n t_i X_i \Bigr|^p \biggr)^{1/p}
\le 2\sup_{t\in T}\biggl(\Ex   \Bigl|\sum_{i=1}^n t_i X_i \Bigr|^p \biggr)^{1/p},
\]	
 what finishes the proof of the remark.
\end{proof}

\begin{remark}
If the variables $X_i$ are not centered then \eqref{eq:p2pweakstrong} holds provided that  the assumption  \eqref{eq:main_thm_assumpt} is replaced by 
\[
\|X_i-\Ex X_i\|_{2p} \le \alpha \|X_i-\Ex X_i\|_p \qquad \mbox{for $p\ge 2$ and $i=1,\ldots,n$}.
\]
\end{remark}

\begin{proof}
We have
\[
\biggl(\Ex  \sup_{t\in T} \Bigl|\sum_{i=1}^n t_iX_i \Bigr|^p \biggr)^{1/p}
\le \biggl(\Ex  \sup_{t\in T} \Bigl|\sum_{i=1}^n t_i(X_i-\Ex X_i) \Bigr|^p \biggr)^{1/p}
+\sup_{t\in T}\Bigl|\sum_{i=1}^n t_i \Ex X_i \Bigr|.
\]
Theorem \ref{thm:p2p_to_paour} applied to centered variables  $X_i-\Ex X_i$, $i=1,\ldots,n$, yields
\begin{multline*}
\biggl(\Ex  \sup_{t\in T} \Bigl|\sum_{i=1}^n t_i(X_i-\Ex X_i) \Bigr|^p \biggr)^{1/p} 
\\
\le C(\alpha) \Biggl[  \Ex  \sup_{t\in T} \Bigl|\sum_{i=1}^n t_i(X_i-\Ex X_i)\Bigr|  
+  \sup_{t\in T} \biggl(\Ex   \Bigl|\sum_{i=1}^n t_i (X_i-\Ex X_i) \Bigr|^p \biggr)^{1/p} \Biggr].
\end{multline*}
To conclude it is enough to observe that
\[
\Ex  \sup_{t\in T} \Bigl|\sum_{i=1}^n t_i(X_i-\Ex X_i)\Bigr|
\le \Ex  \sup_{t\in T} \Bigl|\sum_{i=1}^n t_iX_i\Bigr|
+\sup_{t\in T}\Bigl|\sum_{i=1}^n t_i \Ex X_i \Bigr|,
\]
\[
\sup_{t\in T} \biggl(\Ex   \Bigl|\sum_{i=1}^n t_i (X_i-\Ex X_i) \Bigr|^p \biggr)^{1/p}
\le \sup_{t\in T} \biggl(\Ex   \Bigl|\sum_{i=1}^n t_i X_i \Bigr|^p \biggr)^{1/p}
+\sup_{t\in T}\Bigl|\sum_{i=1}^n t_i \Ex X_i \Bigr|,
\]
and
\[
\sup_{t\in T}\Bigl|\sum_{i=1}^n t_i \Ex X_i \Bigr|\le
\Ex \sup_{t\in T} \Bigl|\sum_{i=1}^n t_i X_i  \Bigr|.
\]
\end{proof}

{\bf Open questions.} For Gaussian random vectors \eqref{eq:weakstrongmom} holds with $C_1=1$.
This is also the case for $X_i$ symmetric, independent with log-concave distributions  \cite[Remark 3.16 and Corollary 2.19]{LaW}.
However, we do not know the general conditions for  the distributions of $X_i$ which are sufficient
for \eqref{eq:weakstrongmom} to hold with $C_1=1$.

It is of interest to study the comparison of weak and strong moments for random vectors 
$X=(X_1,\ldots,X_n)$ with dependent coordinates. 
{A natural and important class to investigate in this context are vectors
with  log-concave distributions (cf.\ \cite{BGVV} for an up to date survey of properties
of such vectors). 
Paouris \cite{Pa} showed that \eqref{eq:weakstrongmom}
holds for log-concave vectors and sets $T$ being balls in Euclidean spaces (see also \cite{MR3150710}). This was generalized in \cite{LaSt} to  balls in $L_r$-spaces with $1\le r<\infty$. Unfortunately there are very few classes of log-concave vectors such that \eqref{eq:weakstrongmom} is known to be satisfied for all sets $T$ -- this
includes vectors uniformly distributed on $l_r^n$-balls (with $1\le r \le \infty$).  \cite[Remark 3.16 and Theorem 5.27]{LaW},
or more generally vectors with densities of the form $\exp(-\varphi(\|x\|_r))$, where 
$\varphi\colon [0,\infty)\to (-\infty,\infty]$
is  non-decreasing and convex, and $1\le r\le \infty$ \cite[Proposition 6.5]{La_sud}.   

\medskip

The organization of this paper is as follows. In Section 2 we prove Theorem \ref{thm:p2p_to_paour}
for unconditional sets $T$ only. Using this result we generalize it to the case of an arbitrary $T$ in Section 3. 
In Section 4 we prove Corollaries \ref{cor:weakstrongtail} and \ref{cor:KahKh}. 
Finally, in Section 5 we present the proof of Theorem \ref{thm:paour_to_p2p}.

\medskip

Throughout this paper by a letter $C$ we denote universal constants and by $C(\alpha)$  constants depending
only on the parameter $\alpha$. The values of the constants $C$, $C(\alpha)$ may differ at each occurrence. 
We will also frequently work with a Bernoulli sequence $\ve_i$ of i.i.d.\ symmetric random variables taking values
$\pm 1$. We assume that variables $\ve_i$ are independent of other random variables.

\section{The case of unconditional sets }

In this section we show that Theorem \ref{thm:p2p_to_paour} holds under additional assumptions that 
the set $T$ is unconditional and  the variables $X_i$ are symmetric. 
Recall that a set $T$ in $\er^n$ is called  \emph{unconditional} if it is symmetric with 
respect to the coordinate axes, i.e. $(\eta_it_i)_{i=1}^n\in T$ for
any $t=(t_i)_{i=1}^n\in T$ and any choice of signs $\eta_1,\ldots,\eta_n\in\{-1,1\}$.

\begin{proposition}
\label{thm:Y_to_X}
Let $r\in (0,1)$ and $L\ge 1$. Assume that variables $Y_1,\ldots,Y_n$ are independent and symmetric and
\begin{equation}
\label{ass:Y_to_X}
\biggl(\Ex  \sup_{t\in T} \Bigl|\sum_{i=1}^n t_iY_i \Bigr|^p \biggr)^{1/p} 
\le 
L \Biggl[  \Ex  \sup_{t\in T} \sum_{i=1}^n t_iY_i  
+  \sup_{t\in T} \biggl(\Ex   \Bigl|\sum_{i=1}^n t_i Y_i \Bigr|^p \biggr)^{1/p} \Biggr] 
\end{equation} 
for all $p\ge 1$ and all unconditional  sets $T$. 
Then variables $X_i:= |Y_i|^{1/r}\sgn Y_i$ satisfy
\begin{equation} 
\label{aim}
\biggl(\Ex  \sup_{t\in T} \Bigl|\sum_{i=1}^n t_iX_i \Bigr|^p \biggr)^{1/p} 
\le 
(2L)^{1/r} \Biggl[  \Ex  \sup_{t\in T} \sum_{i=1}^n t_iX_i  
+  \sup_{t\in T} \biggl(\Ex   \Bigl|\sum_{i=1}^n t_i X_i \Bigr|^p \biggr)^{1/p} \Biggr] 
\end{equation}
for all $p\ge 1$ and all unconditional  sets $T\subset \R^n$.
\end{proposition}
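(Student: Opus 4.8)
The plan is to transfer the comparison inequality from $Y_i$ to $X_i = |Y_i|^{1/r}\sgn Y_i$ by a distributional representation that expresses $X_i$ as a suitable average (or integral) of $Y_i$-type variables against an appropriate weight. The key observation is that for $r \in (0,1)$, raising to the power $1/r > 1$ can be realized by writing $|x|^{1/r}$ as a monotone integral: concretely, since $1/r > 1$, one has $|x|^{1/r} = c_r \int_0^\infty \indbr{|x| > u} \, u^{1/r - 1}\, du$ up to the obvious constant $c_r = 1/r$, but more usefully one wants a representation of $\sum t_i X_i$ as a mixture of linear forms $\sum t_i Y_i^{(\omega)}$ over independent copies. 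The cleanest route I would pursue is the one used for this kind of ``$|Y|^{1/r}$'' trick: replace each $Y_i$ by $\mathcal{E}_i^{1/r - 1} Y_i$ after an appropriate conditioning, where $\mathcal{E}_i$ are auxiliary exponential (or uniform) variables making the product have the right law. Because symmetry of $Y_i$ is preserved and the auxiliary multipliers are nonnegative, unconditionality of $T$ is also preserved when we absorb the multipliers into the coordinates $t_i$.

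The core steps, in order, would be: (i) find the representation $X_i \stackrel{d}{=} R_i Y_i$ or, failing an exact one-variable identity, the representation of the whole process $(\sum_i t_i X_i)_{t\in T}$ as an average over a parameter of processes of the form $(\sum_i t_i R_i Y_i)_{t\in T}$ with $R_i \ge 0$ independent of $(Y_i)$; (ii) condition on the $R_i$'s (equivalently, on the auxiliary randomness) and apply hypothesis \eqref{ass:Y_to_X} to the unconditional set $T' = \{(t_i R_i)_i : t\in T\}$, which is unconditional because the $R_i$ are nonnegative and $T$ is unconditional; (iii) integrate back over the $R_i$'s, using the triangle inequality in $L_p$ for the left-hand side and for the weak-moment term, and using that $\Ex \sup_{t} \sum_i t_i R_i Y_i$ integrates (by Jensen / Fubini, since $\sup$ of a linear form is convex in the multipliers and the $R_i$ are independent of the $Y_i$) to something controlled by $\Ex \sup_t \sum_i t_i X_i$; (iv) track how the constant $L$ transforms, arriving at the claimed $(2L)^{1/r}$ — the power $1/r$ appearing because the moments scale like $\|X_i\|_p = \|\,|Y_i|^{1/r}\|_p = \|Y_i\|_{p/r}^{1/r}$, so a factor $L$ at ``level $p/r$'' becomes $L^{1/r}$ at ``level $p$'', and the factor $2$ should come from splitting/handling the positive and negative parts or from the crude bound on the auxiliary expectation.

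The main obstacle I anticipate is step (iii): controlling $\Ex \sup_{t\in T} \sum_i t_i R_i Y_i$ by $\Ex \sup_{t\in T} \sum_i t_i X_i$ after integrating out $R_i$. One cannot simply pull $R_i$ out of the supremum, so the right tool is a contraction/comparison principle: conditionally on $(Y_i)$, the process $t \mapsto \sum_i t_i R_i Y_i$ should be comparable in expectation (over $R_i$) to $t \mapsto \sum_i t_i Y_i$ times a scalar, via Jensen applied to the convex function $t \mapsto \sup_{t\in T}(\cdot)$ — but here the subtlety is that $R_i$ enters multiplicatively and coordinate-wise, so one really needs either a precise choice of the law of $R_i$ that makes $\Ex R_i = $ (something harmless) together with a contraction inequality for nonnegative multipliers, or an appeal to the representation being exact rather than an average. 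A secondary delicate point is making sure the weak-moment term behaves: $\sup_{t\in T}(\Ex|\sum_i t_i X_i|^p)^{1/p}$ must dominate the corresponding integrated quantity for the $R_i Y_i$ process, which again follows once the representation identifies $\sum_i t_i X_i$ with the mixture, now using the $L_p$-triangle inequality in the opposite direction. I would handle the constant bookkeeping last, since the structure of the argument forces the exponent $1/r$ and only the multiplicative constant in front needs care.
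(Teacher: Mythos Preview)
Your plan hinges on step (i): a representation of $X_i=|Y_i|^{1/r}\sgn Y_i$ as $R_iY_i$ with $R_i\ge 0$ \emph{independent} of $Y_i$ (or a mixture of such). For general symmetric $Y_i$ this representation does not exist. The natural identity is $X_i=|Y_i|^{1/r-1}Y_i$, but the factor $|Y_i|^{1/r-1}$ is a function of $Y_i$, not independent of it; and no averaging over an external parameter fixes this, since $\int R(\omega)Y_i\,d\mu(\omega)=(\int R\,d\mu)\,Y_i$ is just a scalar multiple of $Y_i$, never $|Y_i|^{1/r-1}Y_i$. The layer-cake formula you mention, $|x|^{1/r}=c_r\int_0^\infty \indbr{|x|>u}\,u^{1/r-1}\,du$, produces terms $Y_i\indbr{|Y_i|>u}$ which are again nonlinear in $Y_i$ and cannot be absorbed into the coefficients of an unconditional set. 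So the conditioning you want in step~(ii) never gets off the ground, and the difficulties you anticipated in step~(iii) are downstream of this more basic obstruction.

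The paper avoids distributional tricks entirely and uses a \emph{deterministic} H\"older duality. By unconditionality, $\sup_{t\in T}\bigl|\sum_i t_iX_i\bigr|=\sup_{t\in T}\sum_i|t_i|\,|Y_i|^{1/r}$; raising to the power $r$ and dualizing with $s=(1-r)^{-1}$ gives
\[
\Bigl(\sup_{t\in T}\sum_i|t_i|\,|Y_i|^{1/r}\Bigr)^{r}
=\sup_{t\in T}\sup_{u\in B_s^n}\sum_i u_i|t_i|^{r}Y_i
=\sup_{t\in T_r}\sum_i t_iY_i,
\]
where $T_r=\{(u_i|t_i|^r)_i:\ t\in T,\ u\in B_s^n\}$ is again unconditional. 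One then applies the hypothesis \eqref{ass:Y_to_X} at exponent $p/r$ to the set $T_r$; the expectation term comes back as $(\Ex\sup_{t\in T}|\sum_i t_iX_i|)^{r}$, the weak-moment term is bounded by $\sup_{t\in T}(\Ex|\sum_i t_i|X_i||^p)^{r/p}$, and the passage from $|X_i|$ back to $X_i$ in the weak moment is handled by a short symmetrization. The constant $(2L)^{1/r}$ arises from $L^{1/r}$ (applying the hypothesis at level $p/r$ and then taking the $1/r$-th power) together with the elementary bound $(a+b)^{1/r}\le 2^{1/r-1}(a^{1/r}+b^{1/r})$.
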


\begin{proof}
Definition of $X_i$ and unconditionality of $T$  yield
\[
\sup_{t\in T} \Bigl|\sum_{i=1}^n  t_iX_i \Bigr|  
=\sup_{t\in T} \Bigl|\sum_{i=1}^n  t_i|Y_i|^{1/r}\sgn Y_i \Bigr| 
=\sup_{t\in T} \sum_{i=1}^n  |t_i||Y_i|^{1/r}. 
\]
Let $s=(1-r)^{-1}$ and let $B_s^n$ denote the unit ball of $\ell_s^n$. Then $1/s+r=1$ and by H\"older's duality
we have
\[
\sup_{t\in T} \Bigl|\sum_{i=1}^n  |t_i||Y_i|^{1/r}\Bigr|^r
=\sup_{t\in T}\sup_{u\in B_s^n} \sum_{i=1}^n  u_i|t_i|^r Y_i
=\sup_{t\in T_r}\sum_{i=1}^n  t_i Y_i, 
\]
where
\[
T_{r}:=\{(u_i|t_i|^r)_{i=1}^n\colon\ t\in T, u\in B_s^n\}
\]
is unconditional in $\er^n$. Therefore \eqref{ass:Y_to_X} applied with $p/r$ and $T_r$ instead of $p$ and $T$ yields
\[
\Ex  \sup_{t\in T} \Bigl|\sum_{i=1}^n  t_iX_i \Bigr|^p 
\le
L^{p/r}\Biggl[
\Ex \sup_{t\in T_r}t_i Y_i
+\sup_{t\in T_r}\biggl(\Ex\Bigl|\sum_{i=1}^n  t_i Y_i\Bigr|^{p/r}\biggr)^{r/p}
\Biggr]^{p/r}.
\]
We have
\[
\Ex \sup_{t\in T_r}\sum_{i=1}^n  t_i Y_i=\Ex  \sup_{t\in T} \Bigl|\sum_{i=1}^n  t_iX_i \Bigr|^{r}
\le 
\biggl(\Ex  \sup_{t\in T} \Bigl|\sum_{i=1}^n  t_iX_i \Bigr|\biggr)^{r}.
\]
Moreover,
\begin{align*}
\sup_{t\in T_r}\biggl(\Ex\Bigl|\sum_{i=1}^n  t_i Y_i\Bigr|^{p/r}\biggr)^{r/p}
&\le
\sup_{t\in T}\biggl(\Ex\sup_{u\in B_s^n}\Bigl|\sum_{i=1}^nu_i|t_i|^r Y_i\Bigr|^{p/r}\biggr)^{r/p}
\\
&=
\sup_{t\in T}\biggl(\Ex\Bigl|\sum_{i=1}^n |t_i| |X_i|\Bigr|^{p}\biggr)^{r/p}
=\sup_{t\in T}\biggl(\Ex\Bigl|\sum_{i=1}^n t_i |X_i|\Bigr|^{p}\biggr)^{r/p}.
\end{align*}
Estimates above together with  the inequality $(a+b)^{1/r}\le 2^{1/r-1}(a^{1/r}+b^{1/r})$ yield
\[
\biggl(\Ex  \sup_{t\in T} \Bigl|\sum_{i=1}^n  t_iX_i \Bigr|^p \biggr)^{1/p}
\le 
\frac{1}{2}(2L)^{1/r}\Biggl[
\Ex  \sup_{t\in T} \Bigl|\sum_{i=1}^n  t_iX_i \Bigr|
+\sup_{t\in T}\biggl(\Ex\Bigl|\sum_{i=1}^n t_i |X_i|\Bigr|^{p}\biggr)^{1/p}
\Biggr].
\]		
			
Hence, in order to prove \eqref{aim} it suffices to show that
\begin{equation}
\label{aim2}
\sup_{t\in T} \biggl(\Ex   \Bigl|\sum_{i=1}^n t_i|X_i| \Bigr|^{p} \biggr)^{1/p} 
\le  
\Ex \sup_{t\in T} \sum_{i=1}^n t_iX_i  +  2\sup_{t\in T} \biggl(\Ex\Bigl|\sum_{i=1}^n t_iX_i \Bigr|^p \biggr)^{1/p} .
\end{equation}

Let $(X_1', \ldots , X_n')$ be an independent copy of $(X_1,\ldots, X_n)$.	
By the triangle inequality for the $p$-th integral norm and Jensen's inequality we get
\begin{align}
\notag
\sup_{t\in T} \biggl(\Ex  \Bigl|\sum_{i=1}^n t_i |X_i| \Bigr|^p \biggr)^{1/p} &
\le 
\sup_{t\in T} \biggl(\Ex   \Bigl|\sum_{i=1}^n t_i  \bigl(|X_i| - \Ex|X_i'| \bigr) \Bigr|^p \biggr)^{1/p} 
+ \sup_{t\in T}\Bigl| \Ex   \sum_{i=1}^n t_i |X_i|\Bigr|
\\ &
\notag
\le  
\sup_{t\in T} \biggl(\Ex   \Bigl|\sum_{i=1}^n t_i  \bigl(|X_i| - |X_i'|\bigr) \Bigr|^p \biggr)^{1/p} +  
\Ex \sup_{t\in T}  \Bigl|\sum_{i=1}^n t_i |X_i|\Bigr|
\\ &
\label{interm_step2}
=
\sup_{t\in T} \biggl(\Ex   \Bigl|\sum_{i=1}^n t_i  \bigl(|X_i| - |X_i'|\bigr) \Bigr|^p \biggr)^{1/p} +  
\Ex \sup_{t\in T}  \sum_{i=1}^n t_i X_i,
\end{align}
where the equation follows by the unconditionality of $T$.
	
Let $(\eps_i)_{i=1}^n$ be the Bernoulli sequence, independent of all $X_i$ and $X_i'$. Then
the sequence $(|X_i|-|X_i'|)_{i=1}^n$ has the same distribution as $(\ve_i(|X_i|-|X_i'|))_{i=1}^n$ and
for every $t\in \R^n$,
\begin{align}
\notag
\biggl(\Ex   \Bigl|\sum_{i=1}^n t_i  (|X_i| - |X_i'|) \Bigr|^p \biggr)^{1/p} 
&= 
\biggl(\Ex   \Bigl|\sum_{i=1}^n t_i  \eps_i (|X_i| - |X_i'|) \Bigr|^p \biggr)^{1/p}
\\ 
\notag
& \le 
\biggl(\Ex   \Bigl|\sum_{i=1}^n t_i  \eps_i |X_i| \Bigr|^p \biggr)^{1/p}+
\biggl(\Ex   \Bigl|\sum_{i=1}^n t_i  \eps_i |X_i'| \Bigr|^p \biggr)^{1/p}
\\
\label{interm_step3}
&= 
2 \biggl(\Ex \Bigl|\sum_{i=1}^n t_i  X_i \Bigr|^p \biggr)^{1/p}.
\end{align}
	
Putting \eqref{interm_step2} and \eqref{interm_step3} together we get \eqref{aim2}, what completes 
the proof of \eqref{aim}.
\end{proof}

\begin{corollary}
\label{cor:p2p_to_paour}
Let $X_1,\ldots,X_n$ be independent symmetric random variables with finite moments such that 
\begin{equation}
\label{comp_p2p}
\|X_i\|_{2p} \le \alpha \|X_i\|_p \qquad \mbox{for $p\ge 2$ and $i=1,\ldots,n$},
\end{equation}
where $\alpha$ is a finite positive constant.
Then for every $p\ge 1$ and every unconditional set $T\subset \R^n$ we have
\begin{equation}
\label{eq:weakstronguncond}
\biggl(\Ex  \sup_{t\in T} \Bigl|\sum_{i=1}^n t_iX_i \Bigr|^p \biggr)^{1/p} 
\le C(\alpha) \Biggl[  \Ex  \sup_{t\in T} \sum_{i=1}^n t_iX_i  
+  \sup_{t\in T} \biggl(\Ex   \Bigl|\sum_{i=1}^n t_i X_i \Bigr|^p \biggr)^{1/p} \Biggr] ,
\end{equation}
where $C(\alpha)$ is a constant, which depends only on $\alpha$.
\end{corollary}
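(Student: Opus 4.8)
The plan is to derive \eqref{eq:weakstronguncond} from Proposition~\ref{thm:Y_to_X} by writing the $X_i$ as power transforms $|Y_i|^{1/r}\sgn Y_i$ of symmetric variables $Y_i$ whose $L_q$-norms grow slowly enough that the weak--strong comparison for $Y_i$ is already known. Since \eqref{comp_p2p} only gets stronger when $\alpha$ is enlarged, we may assume $\alpha\ge 3$; put $r:=1/\log_2\alpha\in(0,1)$ and $Y_i:=|X_i|^{r}\sgn X_i$. The $Y_i$ are independent and symmetric, have finite moments, and satisfy $|Y_i|^{1/r}\sgn Y_i=X_i$; hence, once we know that the $Y_i$ obey the hypothesis \eqref{ass:Y_to_X} of Proposition~\ref{thm:Y_to_X} with some $L=L(\alpha)$, that proposition produces \eqref{eq:weakstronguncond} with $C(\alpha)=(2L)^{1/r}$.

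To check \eqref{ass:Y_to_X} for the $Y_i$ I would use the identity $\|Y_i\|_{s}=\|X_i\|_{rs}^{\,r}$ for $s>0$. By \eqref{comp_p2p} it gives $\|Y_i\|_{2p}=\|X_i\|_{2rp}^{\,r}\le\alpha^{r}\|X_i\|_{rp}^{\,r}=2\|Y_i\|_p$ for every $p\ge 2/r$, while for $2\le p<2/r$ the ratio $\|Y_i\|_{2p}/\|Y_i\|_p$ is bounded by $\|Y_i\|_{4/r}/\|Y_i\|_2=(\|X_i\|_4/\|X_i\|_{2r})^{r}$. Using \eqref{comp_p2p} at $p=2$ to get $\|X_i\|_4\le\alpha\|X_i\|_2$, and a Paley--Zygmund (reverse H\"older) argument --- the only nontrivial point --- to get $\|X_i\|_2\le C(\alpha)\|X_i\|_{2r}$ (note $2r<2$), one obtains $\|Y_i\|_{2p}\le\beta\|Y_i\|_p$ for all $p\ge 2$, with $\beta=\beta(\alpha)$ and with $\|Y_i\|_{2p}\le 2\|Y_i\|_p$ on the range $p\ge 2/r$; iterating and interpolating these bounds then yields the linear moment-growth estimate $\|Y_i\|_q\le C(\alpha)\,\tfrac qp\,\|Y_i\|_p$ for all $q\ge p\ge 2$. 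For symmetric independent variables with such moment growth the comparison \eqref{eq:weakstrongmom} holds, with constants $C_1,C_2$ depending only on $\alpha$, for every $p\ge 1$ and every $T\subset\R^n$ (see \cite[Theorem~2.3]{LaTk}). Finally, if $T$ is unconditional then $\sup_{t\in T}\bigl|\sum_{i}t_iY_i\bigr|=\sup_{t\in T}\sum_{i}|t_i|\,|Y_i|=\sup_{t\in T}\sum_{i}t_iY_i$ pointwise (unconditionality lets us flip the signs of the coordinates of the maximizer), so \eqref{eq:weakstrongmom} for $(Y_i)$, restricted to unconditional $T$, is exactly \eqref{ass:Y_to_X} with $L=C_1+C_2$.

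Putting the two paragraphs together, Proposition~\ref{thm:Y_to_X} applies to this $r$ and these $Y_i$ and gives \eqref{eq:weakstronguncond} with $C(\alpha)=(2L(\alpha))^{\log_2\alpha}$, which depends only on $\alpha$. The main obstacle is the low-moment bound $\|X_i\|_2\le C(\alpha)\|X_i\|_{2r}$: hypothesis \eqref{comp_p2p} controls no moment of order below $2$, so one must recover the small moment $\|X_i\|_{2r}$ from the boundedness of the normalized fourth moment of $X_i$, for which Paley--Zygmund is exactly the right tool. Everything else is the algebra of the power transform together with routine iteration of $\|Y_i\|_{2p}\le\beta\|Y_i\|_p$.
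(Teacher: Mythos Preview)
Your argument is correct and follows the same route as the paper: set $r=1/\log_2\alpha$, define $Y_i=|X_i|^{r}\sgn X_i$, verify that the $Y_i$ have linear moment growth $\|Y_i\|_q\le C(\alpha)\tfrac{q}{p}\|Y_i\|_p$ for $q\ge p\ge 2$, invoke \cite[Theorem~2.3]{LaTk} to get \eqref{ass:Y_to_X}, and finish with Proposition~\ref{thm:Y_to_X}. The only difference is in how you cover the low range of exponents. The paper bootstraps via the H\"older inequality
\[
\|Y_i\|_q^q\le\|Y_i\|_p\,\|Y_i\|_{p(q-1)/(p-1)}^{\,q-1}
\]
together with the already-established high-range bound, obtaining $\|Y_i\|_q\le\tfrac14\alpha^4\|Y_i\|_p$ for $2\le p\le q\le 2\log_2\alpha$ without ever looking below the second moment. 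You instead compare $\|Y_i\|_{4/r}/\|Y_i\|_2=(\|X_i\|_4/\|X_i\|_{2r})^r$ and use Paley--Zygmund (from $\|X_i\|_4\le\alpha\|X_i\|_2$) to bound $\|X_i\|_{2r}$ from below by a constant times $\|X_i\|_2$; this is a genuine reverse-H\"older step the paper avoids, but it is valid and gives the same conclusion. Your reduction to $\alpha\ge 3$ is also a clean way to sidestep the $r\ge 1$ case.
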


\begin{proof}
Let us first note, that the assumption \eqref{comp_p2p} applied $k$ times yields that 
\begin{equation*}
\| X_i\|_{2^kp} \le \alpha^k \|X_i\|_p \qquad \mbox{for } p\ge 2.
\end{equation*}
Therefore 
\begin{equation*}
\| X_i\|_{q} 
\le \alpha^{\lceil \log_2(\frac{q}p) \rceil} \|X_i\|_p 
\le \alpha \biggl(\frac qp  \biggr)^{\log_2\alpha} \|X_i\|_p 
\qquad \mbox{for } q\ge p\ge 2.
\end{equation*}

Let $Y_i:=|X_i|^{1/\log_2\alpha}\sgn X_i$. Then $X_i = |Y_i|^{1/r}\sgn Y_i$ with $r:=\frac{1}{\log_2\alpha}$ and 
\begin{equation}
\label{prep_Y_p_to_q}
\| Y_i\|_{q}  \le2 \frac qp   \|Y_i\|_p \qquad \mbox{for } q\ge p\ge 2\log_2 \alpha.
\end{equation}
If $\alpha \le 2$ we have
\begin{equation}
\label{Y_p_to_q}
\| Y_i\|_{q}  \le 2 \frac qp   \|Y_i\|_p \qquad \mbox{for } q\ge p\ge 2.
\end{equation}

Otherwise, take $2\log_2\alpha\ge q\ge p\ge 2$. Then by H\"older's inequality and
\eqref{prep_Y_p_to_q} with exponents  $\frac{p(q-1)}{p-1}$ and $q$ we get
\[
\|Y_i\|_q^q=\Ex |Y_i||Y_i|^{q-1}
\le \bigl( \Ex |Y_i|^p \bigr)^{\frac 1p} \Bigl( \Ex|Y_i|^{\frac{p(q-1)}{p-1}} \Bigr)^{\frac{p-1}p} 
\le \|Y_i\|_p \|Y_i\|_q^{q-1} \biggl(2 \frac{p(q-1)}{q(p-1)} \biggr)^{q-1}. 
\]
Observe that 
\[
\biggl(2 \frac{p(q-1)}{q(p-1)} \biggr)^{q-1}\le 4^{q-1}\le \frac{1}{4}\alpha^4,
\]
so 
\[
\|Y_i\|_q\le \frac{1}{4}\alpha^4\|Y_i\|_p \qquad \mbox{for } 2\log_2\alpha\ge q\ge p\ge 2. 
\]

Thus for any value of $\alpha$ we get
\[
\|Y_i\|_{q}  \le \max\biggl\{2,\frac{1}{2}\alpha^4\biggr\} \frac qp   \|Y_i\|_p \qquad 
\mbox{for } q\ge p\ge 2.
\]
Hence, by \cite[Theorem 2.3]{LaTk}  the variables $Y_1, \ldots , Y_n$ satisfy \eqref{ass:Y_to_X}
(in fact for arbitrary, not only unconditional sets $T$) and the assertion follows by 
Proposition \ref{thm:Y_to_X}.
\end{proof}

\section{Symmetrization argument}

We will use the following  proposition  to prove that we may skip the 
uncon\-dition\-ality assumption  in  Corollary \ref{cor:p2p_to_paour}.

\begin{proposition}
\label{prop:est_bern}
Let $(X_i)_{i=1}^n$ be a sequence of independent random variables with finite second moments 
and let $(\ve_i)_{i=1}^n$ be
a Bernoulli sequence independent of $(X_i)_{i=1}^n$. Then for any $T\subset \er^n$ and  $p\ge 1$,
\begin{equation}
\label{eq:est_bern}
\Ex_X\sup_{t\in T}\biggl(\Ex_{\ve}\Bigl|\sum_{i=1}^nt_i\ve_i X_i\Bigr|^p\biggr)^{1/p}
\le
C\Biggl[\Ex\sup_{t\in T}\sum_{i=1}^nt_i\ve_i X_i
+\sup_{t\in T}\biggl(\Ex \Bigl|\sum_{i=1}^n t_i\ve_i X_i\Bigr|^p\biggr)^{1/p}\Biggr].
\end{equation}
\end{proposition}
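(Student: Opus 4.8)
\textbf{Plan of proof for Proposition \ref{prop:est_bern}.}
The key idea is that, conditionally on $(X_i)_{i=1}^n$, the random vector $(\ve_i X_i)_{i=1}^n$ is a symmetric random vector with independent coordinates, and the (random) set $T\cdot X := \{(t_i X_i)_{i=1}^n : t\in T\}$ need not be unconditional, but we can make it so by enlarging it. Concretely, fix a realization of $(X_i)_{i=1}^n$ and set $\widetilde T := \{(\eta_i t_i X_i)_{i=1}^n : t\in T,\ \eta\in\{-1,1\}^n\}$, the unconditional hull of $T\cdot X$. Then $\sup_{t\in T}|\sum_i t_i\ve_i X_i| = \sup_{s\in\widetilde T}\sum_i s_i\ve_i$ and $\sup_{t\in T}(\Ex_\ve|\sum_i t_i\ve_i X_i|^p)^{1/p} = \sup_{s\in\widetilde T}(\Ex_\ve|\sum_i s_i\ve_i|^p)^{1/p}$, so conditionally on $X$ the left-hand side is governed purely by Bernoulli chaos of order one over an unconditional set. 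The plan is therefore: first apply the known weak–strong moment comparison for Bernoulli sequences (Dilworth--Montgomery-Smith \cite{DMS}, or equivalently \cite[Theorem 2.3]{LaTk} with $\ve_i$, which satisfy $\|\ve_i\|_{2p}=\|\ve_i\|_p$) to the unconditional set $\widetilde T$, conditionally on $X$; then take $\Ex_X$ of both sides.

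After the conditional application we get, for each fixed $X$,
\[
\sup_{t\in T}\Bigl(\Ex_\ve\Bigl|\sum_i t_i\ve_i X_i\Bigr|^p\Bigr)^{1/p}
\le C\Bigl[\Ex_\ve\sup_{t\in T}\sum_i t_i\ve_i X_i + \sup_{t\in T}\Bigl(\Ex_\ve\Bigl|\sum_i t_i\ve_i X_i\Bigr|^p\Bigr)^{1/p}\Bigr]
\]
wait --- that is circular, so instead the Bernoulli comparison must be stated in the sharper form in which the weak $p$-th moment is controlled by the first moment plus a \emph{deterministic} quantity such as $\sup_{t\in T}\|\sum_i t_i X_i\|_{\ell_2}$ times $\sqrt p$; in fact \cite[Theorem 2.3]{LaTk}, applied with the variables $\ve_i$ and the set $\widetilde T$, gives directly
$\sup_{s\in\widetilde T}(\Ex_\ve|\sum_i s_i\ve_i|^p)^{1/p}\le C[\Ex_\ve\sup_{s\in\widetilde T}\sum_i s_i\ve_i + \sup_{s\in\widetilde T}(\Ex_\ve|\sum_i s_i\ve_i|^p)^{1/p}]$, and since $\sup_{s\in\widetilde T}(\Ex_\ve|\sum_i s_i\ve_i|^p)^{1/p}=\sup_{t\in T}(\Ex_\ve|\sum_i t_i\ve_i X_i|^p)^{1/p}$, the second terms on both sides coincide. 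So the right move is to unravel the proof of \cite[Theorem 2.3]{LaTk} one notch: the weak $p$-th moment of a Bernoulli sum $\sum_i s_i\ve_i$ equals (up to constants) $\sup_{s\in\widetilde T}(\sqrt p\,\|s\|_2 + \|s\|_\infty \cdot p)$-type expression, i.e.\ one does NOT iterate but bounds $\sup_{t\in T}(\Ex_\ve|\sum_i t_i\ve_i X_i|^p)^{1/p}$ by $\sup_{t\in T}(\Ex|\sum_i t_i\ve_i X_i|^p)^{1/p}$ trivially on the right, using that the $\ve_i$ alone carry the $p$-dependence while $X$ is frozen. The cleanest formulation: apply \cite[Theorem 2.3]{LaTk} conditionally to obtain the displayed inequality with $\Ex_\ve$ in place of $\Ex$ on the right, then integrate in $X$; the term $\Ex_X\Ex_\ve\sup_{t\in T}\sum_i t_i\ve_i X_i=\Ex\sup_{t\in T}\sum_i t_i\ve_i X_i$ is exactly the first term on the right of \eqref{eq:est_bern}, and $\Ex_X\sup_{t\in T}(\Ex_\ve|\sum_i t_i\ve_i X_i|^p)^{1/p}\ge$ the weak term would again be circular, so one instead bounds $\sup_{t\in T}(\Ex_\ve|\sum_i t_i\ve_i X_i|^p)^{1/p}\le \sup_{t\in T}(\Ex_{\ve'}|\sum_i t_i\ve'_i X_i|^p)^{1/p}$ and takes $\Ex_X$ to land on $\sup_{t\in T}(\Ex|\sum_i t_i\ve_i X_i|^p)^{1/p}$ via Fubini and the fact that the supremum of $L_p(\ve)$-norms is at most the $L_p(\ve,X)$-norm of the supremum --- no, the supremum is outside. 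The genuinely correct path: take $\Ex_X$ of the conditional inequality, obtaining on the left $\Ex_X\sup_{t\in T}(\Ex_\ve|\cdot|^p)^{1/p}$ and on the right $C\Ex\sup_{t}\sum_i t_i\ve_i X_i + C\,\Ex_X\sup_t(\Ex_\ve|\cdot|^p)^{1/p}$, which is useless; hence the Bernoulli comparison used must be the one-sided bound $\sup_{s\in\widetilde T}(\Ex_\ve|\langle s,\ve\rangle|^p)^{1/p}\le C\Ex_\ve\sup_{s\in\widetilde T}\langle s,\ve\rangle + C\sqrt p\sup_{s\in\widetilde T}\|s\|_2$, and then after $\Ex_X$ one recognizes $\sqrt p\sup_{t\in T}\|(t_iX_i)_i\|_2\le C\sup_{t\in T}\|\sum_i t_i\ve_i X_i\|_p$ by the Khintchine inequality (again conditionally, then $\Ex_X$), which gives the second term of \eqref{eq:est_bern}.

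\textbf{Main obstacle.} The delicate point is exactly this bookkeeping: one must \emph{not} invoke the Bernoulli weak–strong comparison in its self-referential form (weak $p$-moment $\le$ first moment $+$ weak $p$-moment), but rather in a form where the ``strong'' side is replaced by the Euclidean/$\ell_2$ parameter of the set, so that after taking $\Ex_X$ the reverse Khintchine (Paley--Zygmund or Khintchine's inequality $\|\sum t_i\ve_i X_i\|_p\gtrsim\sqrt p\,\|(t_iX_i)_i\|_2$, valid for $p\ge 1$ since $\ve_iX_i$ is symmetric) converts that parameter back into $\sup_{t\in T}(\Ex|\sum_i t_i\ve_iX_i|^p)^{1/p}$. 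The unconditionalization step (passing from $T$ to $\widetilde T$ and back) is routine once one notes $\sup_{t\in T}|\sum_i t_i\ve_iX_i|=\sup_{t\in T}\sum_i |t_i|\,|X_i|\cdot(\text{signs absorbed})$ and uses symmetry of $\ve$; and the final step is just Fubini to swap $\Ex_X$ and $\Ex_\ve$ in the first term. I would organize the write-up as: (1) condition on $X$, unconditionalize; (2) quote the Bernoulli comparison in its Euclidean-parameter form; (3) take $\Ex_X$; (4) apply Khintchine conditionally and take $\Ex_X$ again to finish.
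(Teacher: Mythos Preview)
Your plan has a genuine gap at step (4), and it is precisely the step you flag as ``the delicate point''. After step (2) you are left with bounding
\[
\sqrt{p}\,\Ex_X\sup_{t\in T}\Bigl(\sum_{i=1}^n t_i^2X_i^2\Bigr)^{1/2}
\]
by the right-hand side of \eqref{eq:est_bern}. You propose to do this via ``Khintchine's inequality $\|\sum_i t_i\ve_i X_i\|_p\gtrsim\sqrt p\,\|(t_iX_i)_i\|_2$''. That inequality is false: the Khintchine \emph{upper} bound reads $\|\sum_i s_i\ve_i\|_p\le C\sqrt p\,\|s\|_2$, but the \emph{lower} bound is only $\|\sum_i s_i\ve_i\|_p\ge c\|s\|_2$, with no factor $\sqrt p$. (Take $s=e_1$: then $\|\sum_i s_i\ve_i\|_p=1$ while $\sqrt p\,\|s\|_2=\sqrt p$.) Consequently your upper bound in step (2), namely $\sup_t\|\sum_i t_i\ve_iX_i\|_{L_p(\ve)}\le C\sqrt p\sup_t\|(t_iX_i)\|_2$, is in general far too crude to be reversed, and the term $\sqrt p\,\Ex_X\sup_t\|(t_iX_i)\|_2$ simply cannot be controlled by the right-hand side of \eqref{eq:est_bern}: already for $n=1$, $T=\{e_1\}$, $X_1$ a Rademacher variable, this intermediate quantity equals $\sqrt p$ while both terms on the right of \eqref{eq:est_bern} are $1$. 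So steps (2)--(4) collapse into a circularity you never escape.

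The paper avoids this by using the \emph{sharp} two-sided description of Bernoulli moments (Hitczenko): $\|\sum_i s_i\ve_i\|_p\asymp \sum_{i\le p}s_i^*+\sqrt p\,(\sum_{i>p}(s_i^*)^2)^{1/2}$. This identifies the correct deterministic parameter $a=\frac1p\sup_{t\in T}\bigl(\sum_{i\le p}t_i^*+\sqrt p(\sum_{i>p}(t_i^*)^2)^{1/2}\bigr)$, which \emph{is} dominated by the weak $p$-th moment (this is where $\Ex X_i^2=1$ is used). One then decomposes, conditionally on $X$, $\|\sum_i t_i\ve_iX_i\|_{L_p(\ve)}\le \sum_i(|t_iX_i|-a)_+ + C\sqrt p\,(\sum_i\min\{(t_iX_i)^2,a^2\})^{1/2}$, and bounds $\Ex_X\sup_{t\in T}$ of each piece separately by a centering-plus-symmetrization argument together with Talagrand's contraction principle for Bernoulli processes (applied to the $1$-Lipschitz map $x\mapsto(|x|-a)_+$ and the $2a$-Lipschitz map $x\mapsto\min\{x^2,a^2\}$). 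The truncation at level $a$ is exactly what your crude $\sqrt p\,\|\cdot\|_2$ bound is missing.
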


\begin{proof}

Since this is only a matter of normalization we may and will assume that
$\Ex X_i^2=1$ for all $i$.

Let $m$ be such an integer that $2m\le p < 2(m+1)$. Then, by the symmetry of $X_i, \eps_i$, and  
the independence of $X_1, \ldots , X_n, \eps_1,\ldots  ,\eps_n$  we have
\begin{align*}
\biggl\|\sum_{i=1}^nt_i\ve_i X_i\biggr\|_{p} 
& \ge 
\biggl\|\sum_{i=1}^nt_i\ve_i  X_i\biggr\|_{2m}  
\\ 
& = \biggl( \sum_{i_1+\ldots i_n =m} 
c_{i_1,\ldots ,i_n} t_1^{2i_1}\ldots t_n^{2i_n} \Ex X_1^{2i_1} \ldots \Ex X_n^{2i_n}  \biggr)^{1/2m}
\\ 
& \ge
\biggl( \sum_{i_1+\ldots i_n =m} c_{i_1,\ldots ,i_n} t_1^{2i_1}\ldots t_n^{2i_n}  \biggr)^{1/2m} 
\\ 
& =
\biggl( \sum_{i_1+\ldots i_n =m} c_{i_1,\ldots ,i_n} t_1^{2i_1}\ldots t_n^{2i_n}   
\Ex \eps_1^{2i_1} \ldots \Ex \eps_n^{2i_n}  \biggr)^{1/2m} 
=
\biggl\|\sum_{i=1}^nt_i\ve_i \biggr\|_{2m},
\end{align*}
where
\begin{equation*}
c_{i_1, \ldots , i_n} = \frac{(2i_1+\ldots +2i_n)!}{(2i_1)!\ldots (2i_n)!}.
\end{equation*}
	
Moreover by the result of Hitczenko \cite{MR1244666},
 	\begin{equation*}
		\biggl\|\sum_{i=1}^nt_i\ve_i \biggr\|_{2m} \ge \frac 1C \Biggl[ \sum_{i\le 2m} t_i^* + \sqrt{2m} \sqrt{\sum_{i>2m} |t_i^*|^2} \Biggr],
	\end{equation*}
 where $(t_i^*)_{i=1}^n$ denotes the non-increasing rearrangement of $(|t_i|)_{i=1}^n$. 

 Therefore
to establish \eqref{eq:est_bern} it is enough to show that
\begin{equation}
\label{eq:est_bern2}
\Ex\sup_{t\in T}\biggl(\Ex_{\ve}\Bigl|\sum_{i=1}^nt_i\ve_i X_i\Bigr|^p\biggr)^{1/p}
\le 
C\biggl(\Ex\sup_{t\in T}\sum_{i=1}^n t_i\ve_i X_i +pa\biggr),
\end{equation}
where
\[
a:=\frac{1}{p}\sup_{t\in T}\biggl(\sum_{i\le p} t_i^*+\sqrt{p}\Bigl(\sum_{i>p}|t_i^*|^2\Bigr)^{1/2}\biggr). 
\]

To this end observe that since
\begin{align*}
\biggl\|\sum_{i=1}^n u_i\ve_i\biggr\|_p\le C\sqrt{p}\|u\|_2,\quad 
&\biggl\|  \sum_{i=1}^n  u_i\ve_i\biggr\|_p\le \|u\|_1,
\\  
\mbox{ and }\quad \biggl\|\sum_{i=1}^n u_i\ve_i\biggr\|_p 
& = \biggl\|\sum_{i=1}^n |u_i|\ve_i\biggr\|_p,
\end{align*}
we have
\[
\biggl\|\sum_{i=1}^n u_i\ve_i\biggr\|_p\le \sum_{i=1}^n(|u_i|-a)_+ 
+C\sqrt{p}\biggl(\sum_{i=1}^n\min\{u_i^2, a^2\}\biggr)^{1/2}.
\]
Thus
\begin{multline}
\label{eq:est_cut1}
\Ex_X\sup_{t\in T}\biggl(\Ex_{\ve}\Bigl|\sum_{i=1}^nt_i\ve_i X_i\Bigr|^p\biggr)^{1/p}
\\
\le \Ex\sup_{t\in T}\sum_{i=1}^n \bigl(|t_iX_i|-a\bigr)_+ 
+C\sqrt{p}\biggl(\Ex\sup_{t\in T}\sum_{i=1}^n\min\big\{(t_iX_i)^{2},a^2\big\}\biggr)^{1/2}.
\end{multline}

To estimate the first term above observe that
\begin{multline*}
	\Ex\sup_{t\in T}\sum_{i=1}^n \bigl(|t_iX_i|-a\bigr)_+
	\\
	\le
	\sup_{t\in T}\Ex\sum_{i=1}^n \bigl(|t_iX_i|-a\bigr)_+
	+\Ex\sup_{t\in T}\sum_{i=1}^n\Big( \bigl(|t_iX_i|-a\bigr)_+-\Ex \bigl(|t_iX_i'|-a\bigr)_+\Big),
\end{multline*}
where $(X_i')_i$ is a copy of $(X_i)$, independent of  $(\ve_i)$ and $(X_i)$. 

Observe that for any $u$ and $i$
\[
\Ex\bigl(|uX_i|-a\bigr)_+\le |u|\Ex|X_i|\le |u| \|X_i\|_2 = |u|
\]
and, by the Cauchy-Schwarz inequality and the Markov inequality
\begin{align*}
\Ex \bigl(|uX_i|-a\bigr)_+ 
&\le |u|\Ex|X_i|I_{\{|X_i|\ge a/|u|\}}
\le |u|\|X_i\|_2 \bigl(\Pr(|X_i|\ge a/|u|)\bigr)^{1/2}
\\ 
&\le |u|\|X_i\|_2^2\frac{|u|}{a}=\frac{u^2}{a}.
\end{align*}
Hence for any $t\in T$
\[
\sum_{i=1}^n\Ex \bigl(|t_iX_i|-a\bigr)_+\le \sum_{i\le p}t_i^*+\frac{1}{a}\sum_{i>p}(t_i^*)^2\le 2pa.
\]

Moreover, by the Jensen inequality
\begin{align*}
\Ex\sup_{t\in T}\sum_{i=1}^n \Bigl( \bigl(|t_iX_i|-a\bigr)_+&-\Ex \bigl(|t_iX_i'|-a\bigr)_+ \Bigr)
\\
&\le \Ex\sup_{t\in T}\sum_{i=1}^n\Bigl( \bigl(|t_iX_i|-a \bigr)_+ -\bigl(|t_iX_i'|-a\bigr)_+\Big)
\\
&= \Ex\sup_{t\in T}\sum_{i=1}^n\ve_i\Bigl( \bigl(|t_iX_i|-a\bigr)_+-\bigl(|t_iX_i'|-a\bigr)_+\Big)
\\
&\le \Ex\sup_{t\in T}\sum_{i=1}^n\ve_i \bigl(|t_iX_i|-a\bigr)_+ + \Ex\sup_{t\in T}\sum_{i=1}^n-\ve_i \bigl(|t_iX_i'|-a\bigr)_+
\\
&=2\Ex \sup_{t\in T}\sum_{i=1}^n\ve_i \bigl(|t_iX_i|-a\bigr)_+.
\end{align*}
Function $x\mapsto (|x|-a)_+$ is $1$-Lipschitz, so Talagrand's comparison theorem for Bernoulli processes
\cite[Theorem 2.1]{MR1207231} yields
\[
\Ex\sup_{t\in T}\sum_{i=1}^n\ve_i \bigl(|t_iX_i|-a\bigr)_+\le \Ex\sup_{t\in T}\sum_{i=1}^nt_i\ve_i X_i.
\]
Therefore
\begin{equation}
\label{eq:est_cut2}
\Ex\sup_{t\in T}\sum_{i=1}^n \bigl(|t_iX_i|-a\bigr)_+\le 2pa+2\Ex\sup_{t\in T}\sum_{i=1}^nt_i\ve_i X_i.
\end{equation}

Now we turn our attention to the other term in \eqref{eq:est_cut1}. We have
\begin{align*}
&\Ex\sup_{t\in T}\sum_{i=1}^n\min\bigl\{(t_iX_i)^{2}, a^2\bigr\}
\\
&\le \sup_{t\in T}\Ex\sum_{i=1}^n\min \bigl\{(t_iX_i)^{2}, a^2\bigr\}
+ \Ex\sup_{t\in T}\sum_{i=1}^n\Bigl(\min\bigl\{(t_iX_i)^{2}, a^2\bigr\}-\Ex\min\bigl\{(t_iX_i)^{2}, a^2\bigr\}\Bigr).
\end{align*}

We have
\[
\sum_{i=1}^n \Ex \min\bigl\{(t_iX_i)^{2}, a^2\bigr\}
\le \sum_{i=1}^n\min\bigl\{a^2,t_i^2\Ex X_i^2\bigr\}\le pa^2+\sum_{i>p}(t_i^*)^2
\le 2pa^2.
\]
Moreover, by the Jensen inequality
\begin{align*}
\Ex\sup_{t\in T}\sum_{i=1}^n\Bigl(\min\bigl\{(t_iX_i)^{2}, a^2\bigr\}-&\Ex\min\bigl\{(t_iX_i')^{2}, a^2\bigr\}\Bigr)
\\
&\le
\Ex\sup_{t\in T}\sum_{i=1}^n\Bigl(\min\bigl\{(t_iX_i)^{2}, a^2\bigr\}-\min\bigl\{(t_iX_i')^{2}, a^2\bigr\}\Bigr)
\\
& =
\Ex\sup_{t\in T}\sum_{i=1}^n\ve_i\Bigl(\min\bigl\{(t_iX_i)^{2}, a^2\bigr\}-\min\bigl\{(t_iX_i')^{2}, a^2\bigr\}\Bigr)
\\
&
\le 2\Ex \sup_{t\in T}\sum_{i=1}^n\ve_i \min\bigl\{(t_iX_i)^{2}, a^2\bigr\}.
\end{align*}
Function $x\mapsto \min\{x^2,a^2\}$ is $2a$-Lipschitz, so using  the comparison theorem for Ber\-noulli
processes again we get
\[
\Ex\sup_{t\in T}\sum_{i=1}^n\ve_i \min\bigl\{(t_iX_i)^{2}, a^2\bigr\}
\le 2a\Ex\sup_{t\in T}\sum_{i=1}^nt_i\ve_i X_i.
\]
Thus
\begin{multline}
p\Ex\sup_{t\in T}\sum_{i=1}^n\min\bigl\{(t_iX_i)^{2}, a^2\bigr\}
\le 
2p^2a^2+4pa\Ex\sup_{t\in T}\sum_{i=1}^n t_i\ve_i X_i
\\
\label{eq:est_cut3}
\le \biggl(2pa+\Ex\sup_{t\in T}\sum_{i=1}^n t_i\ve_i X_i\biggr)^2.
\end{multline}

Estimate \eqref{eq:est_bern2} follows by \eqref{eq:est_cut1}-\eqref{eq:est_cut3}.

\end{proof}

\begin{proof}[Proof of Theorem  \ref{thm:p2p_to_paour}]
Since  it is enough to consider $T\cup (-T)$ instead of $T$, we may and will assume that the set $T$ is symmetric, i.e. $T=-T$.
 
Assume first that  the variables $X_i$ are also symmetric.
Let $\ve=(\ve_i)_{i=1}^n$ be a Bernoulli sequence independent of $(X_i)_{i=1}^n$.
Weak and strong moments of $(\ve_i)_{i=1}^n$ are comparable
\[
\biggl(\Ex\sup_{s\in S}  \Bigl|\sum_{i=1}^n s_i\eps_i \Bigr|^p \biggr)^{1/p} 
\le 
C \Biggl[  \Ex  \sup_{s\in S} \Bigl|\sum_{i=1}^n s_i\eps_i\Bigr|  
+  \sup_{s\in S} \biggl(\Ex   \Bigl|\sum_{i=1}^n s_i \eps_i \Bigr|^p \biggr)^{1/p} \Biggr].  
\] 
Hence the symmetry of $X_i$ yields
\begin{align}
\notag
\biggl(\Ex  \sup_{t\in T} &  \Bigl|  \sum_{i=1}^n  t_iX_i  \Bigr|^p  \biggr)^{1/p}
=\biggl(\Ex_X \Ex_\eps  \sup_{t\in T} \Bigl|\sum_{i=1}^n  t_i X_i\eps_i \Bigr|^p \biggr)^{1/p}
\\
\label{eq:bernsymm}
&\le 
2C  \Biggl[  
\biggl(\Ex_X   \Bigl(\Ex_\eps  \sup_{t\in T} \Bigl|\sum_{i=1}^n t_iX_i\eps_i \Bigr| \Bigr)^p \biggr)^{1/p}  
+  \biggl(\Ex_X  \sup_{t\in T} \Ex_\eps   \Bigl|\sum_{i=1}^n t_i X_i \eps_i \Bigr|^p \biggr)^{1/p}\Biggr],
\end{align}
since $(a+b)^p \le 2^p (a^p +b^p)$.

Since $T$ is symmetric, we have for $x\in \er^n$,
\[
\Ex_\eps  \sup_{t\in T} \Bigl|\sum_{i=1}^n t_ix_i\eps_i \Bigr|=
\sup_{t\in T_1}\sum_{i=1}^n t_ix_i, 
\]
where 
\[
T_1:=\left\{(\Ex_{\ve} s_i(\ve)\ve_i)_{i=1}^n \colon\ s\colon \{-1,1\}^n\to T\right\}
\]
is an unconditional subset of $\er^n$. Estimate \eqref{eq:weakstronguncond} applied for $T_1$ instead of $T$ 
yields
\begin{multline*}
\biggl(\Ex_X   \Bigl(\Ex_\eps  \sup_{t\in T} \Bigl|\sum_{i=1}^n t_iX_i\eps_i \Bigr| \Bigr)^p \biggr)^{1/p}
\\
\le
C(\alpha)\Biggl[
\Ex_X \Ex_\eps  \sup_{t\in T} \Bigl|\sum_{i=1}^n t_iX_i\eps_i \Bigr|
+\sup_{t\in T_1}\biggl(\Ex\Bigl|\sum_{i=1}t_iX_i\Bigr|^p\biggr)^{1/p}
\Biggr].
\end{multline*}
By the symmetry of $X_i$ we have
\[
\Ex_X \Ex_\eps  \sup_{t\in T} \Bigl|\sum_{i=1}^n t_iX_i\eps_i \Bigr|
=\Ex \sup_{t\in T} \sum_{i=1}^n t_iX_i.
\]
Moreover,
\[
T_1\subset S(T):=\mathrm{conv}\left\{(\eta_i t_i)_{i=1}^n: \eta\in\{ -1,1\}^n, t\in T\right\},
\]
hence
\[
\sup_{t\in T_1}\biggl(\Ex\Bigl|\sum_{i=1}^nt_iX_i\Bigr|^p\biggr)^{1/p}
\le \sup_{t\in S(T)}\biggl(\Ex\Bigl|\sum_{i=1}t_iX_i\Bigr|^p\biggr)^{1/p}
=\sup_{t\in T}\biggl(\Ex\Bigl|\sum_{i=1}^nt_iX_i\Bigr|^p\biggr)^{1/p}.
\]
Thus
\begin{multline}
\biggl(\Ex_X   \Bigl(\Ex_\eps  \sup_{t\in T} \Bigl|\sum_{i=1}^n t_iX_i\eps_i \Bigr| \Bigr)^p \biggr)^{1/p}
\\
\label{eq:term1}
\le
C(\alpha)\Biggl[\Ex \sup_{t\in T} \Bigl|\sum_{i=1}^n t_iX_i \Bigr|+
\sup_{t\in T}\biggl(\Ex\Bigl|\sum_{i=1}t_iX_i\Bigr|^p\biggr)^{1/p}
\Biggr].
\end{multline}

Let $q=p/(p-1)$ be the H\"older's dual of $p$. For $x\in \er^n$ we have
\[
\biggl(\sup_{t\in T} \Ex_\eps   \Bigl|\sum_{i=1}^n t_i x_i \eps_i \Bigr|^p \biggr)^{1/p}
=\sup_{t\in T_2}\sum_{i=1}^n t_ix_i,
\]
where
\[
T_2=\left\{\Ex_\ve t h(\ve)\colon\ t\in T, h\colon\{-1,1\}^n\to \er, \Ex_\ve|h(\ve)|^q\le 1\right\}
\]
is  a unconditional subset of $\er^n$. Estimate \eqref{eq:weakstronguncond} applied for $T_2$ instead of $T$ 
yields
\begin{multline*}
\biggl(\Ex_X  \sup_{t\in T} \Ex_\eps   \Bigl|\sum_{i=1}^n t_i X_i \eps_i \Bigr|^p \biggr)^{1/p}
\\
\le 
C(\alpha)\Biggl[
\Ex_X  \biggl(\sup_{t\in T} \Ex_\eps   \Bigl|\sum_{i=1}^n t_i X_i \eps_i \Bigr|^p \biggr)^{1/p}
+
\sup_{t\in T_2}\biggl(\Ex \Bigl|\sum_{i=1}^n t_i X_i\Bigr|^p\biggr)^{1/p}
\Biggr].
\end{multline*}
Proposition \ref{prop:est_bern} and the symmetry of $X_i$ gives
\[
\Ex_X  \biggl(\sup_{t\in T} \Ex_\eps   \Bigl|\sum_{i=1}^n t_i X_i \ve_i\Bigr|^p \biggr)^{1/p}
\le
C\Biggl[\Ex\sup_{t\in T}\sum_{i=1}^n t_iX_i
+\sup_{t\in T}\biggl(\Ex \Bigl|\sum_{i=1}^n t_i X_i\Bigr|^p\biggr)^{1/p}\Biggr].
\]
Since $T_2\subset \operatorname{conv}T$ (recall that we assume the symmetry of $T$) we have
\[
\sup_{t\in T_2}\biggl(\Ex \Bigl|\sum_{i=1}^n t_i X_i\Bigr|^p\biggr)^{1/p}
\le \sup_{t\in \operatorname{conv}T}\biggl(\Ex \Bigl|\sum_{i=1}^n t_i X_i\Bigr|^p\biggr)^{1/p}
=\sup_{t\in T}\biggl(\Ex \Bigl|\sum_{i=1}^n t_i X_i\Bigr|^p\biggr)^{1/p}.
\]
Thus 
\begin{multline}
\biggl(\Ex_X  \sup_{t\in T} \Ex_\eps   \Bigl|\sum_{i=1}^n t_i X_i \eps_i \Bigr|^p \biggr)^{1/p}
\\
\label{eq:term2}
\le 
C(\alpha)\biggl[
\Ex\sup_{t\in T}\sum_{i=1}^n t_iX_i
+\sup_{t\in T}\biggl(\Ex \Bigl|\sum_{i=1}^n t_i X_i\Bigr|^p\biggr)^{1/p}
\Biggr].
\end{multline}

Estimate \eqref{eq:p2pweakstrong} follows (for symmetric $X_i$'s) by
\eqref{eq:bernsymm}-\eqref{eq:term2}

In the case when  the variables $X_i$ are centered, but not necessarily symmetric let
$(X_1' ,\ldots , X_n')$ be an independent copy of $(X_1, \ldots ,X_n)$.
Then $X_i-X_i'$ are symmetric. The Jensen inequality and the assumption on $X_i$ imply that 
for any $p\ge 2$ we have
\begin{equation*}
\|X_i-X_i'\|_{2p} \le 2\|X_i\|_{2p}\le 2\alpha \|X_i -\Ex X_i\|_p \le 2\alpha \|X_i-X_i'\|_p.
\end{equation*}
Therefore, Theorem \ref{thm:p2p_to_paour} applied to $(X_1-X_1' , \ldots ,X_n-X_n')$ implies
\begin{align*}
\biggl(\Ex  \sup_{t\in T} \Bigl|\sum_{i=1}^n t_i & X_i \Bigr|^p  \biggr)^{1/p} 
\\
& 
= \biggl(\Ex  \sup_{t\in T} \Bigl|\sum_{i=1}^n t_i(X_i - \Ex X_i') \Bigr|^p \biggr)^{1/p}
\le \biggl(\Ex  \sup_{t\in T} \Bigl|\sum_{i=1}^n t_i(X_i-X_i') \Bigr|^p \biggr)^{1/p}
\\ 
&
\le C(2\alpha) \Biggl[  
\Ex  \sup_{t\in T} \Bigl|\sum_{i=1}^n t_i(X_i-X_i')\Bigr|  
+  \sup_{t\in T} \biggl(\Ex   \Bigl|\sum_{i=1}^n t_i (X_i-X_i') \Bigr|^p \biggr)^{1/p} \Biggr]
\\ 
&
\le 2C(2\alpha) \Biggl[ 
\Ex  \sup_{t\in T} \Bigl| \sum_{i=1}^n t_iX_i  \Bigr|
+  \sup_{t\in T} \biggl(\Ex   \Bigl|\sum_{i=1}^n t_i X_i \Bigr|^p \biggr)^{1/p} \Biggr],
\end{align*}
 what finishes the proof in the general case.
\end{proof}

\begin{remark}
It follows by the proof of \cite[Theorem 2.3]{LaTk} that if $(X_i)_{i=1}^n$ are  symmetric,
independent and for any $i$  moments of $X_i$ grow $\beta $-regularly 
(i.e.  \eqref{Y_p_to_q} holds with $\beta$ instead of $2$),
then the comparison of weak and strong moments of suprema  of linear combinations of variables $X_i$
holds with a  constant $C(\beta)= C\beta^{11}$.  
Therefore, we may follow the constants in the proofs above to obtain that 
Theorem \ref{thm:p2p_to_paour} holds with $C(\alpha)=C^{\log_2^2\alpha}$.
\end{remark}

\section{From comparison of weak and strong moments \\ to comparison of weak and strong tails}
\label{sec:wst}

In this Section we prove Corollary \ref{cor:weakstrongtail} and Corollary \ref{cor:KahKh}. To this end we need the following lemma.

\begin{lemma}
 
Assume $X_1, X_2, \ldots$ satisfy the assumptions of Theorem \ref{thm:p2p_to_paour}. 
Then for any $t\in \er^n$,
\begin{equation}
\label{eq:p2qlincomb}
\biggl\|\sum_{i=1}^n t_iX_i \biggr\|_p\le C(\alpha) \biggl(\frac{p}{q}\biggr)^{\max\{ 1/2,\log_2\alpha\}}
\biggl\|\sum_{i=1}^n t_iX_i \biggr\|_q \quad \mbox{ for }p\ge q\ge 2.
\end{equation}
\end{lemma}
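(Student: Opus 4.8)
The plan is to reduce to symmetric variables with regularly growing moments, replace the linear combination by a conditionally Rademacher sum, invoke a sharp two‑sided moment estimate, and then compare the two sides so that the Gaussian and the heavy‑tail contributions stay separated.

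\emph{Reduction.} By homogeneity fix $t$. Since the $X_i$ are mean zero, passing to $(X_i-X_i')$ (with $(X_i')$ an independent copy of $(X_i)$) costs only bounded factors: Jensen's inequality gives $\|\sum_i t_iX_i\|_r\le\|\sum_i t_i(X_i-X_i')\|_r\le 2\|\sum_i t_iX_i\|_r$ and $\|X_i\|_r\le\|X_i-X_i'\|_r$ for every $r$. Iterating \eqref{eq:main_thm_assumpt} gives $\|X_i\|_{2^kr}\le\alpha^k\|X_i\|_r$ for $r\ge2$, hence $\|X_i-X_i'\|_s\le 2\alpha\,(s/r)^{\log_2\alpha}\|X_i-X_i'\|_r$ for $s\ge r\ge2$; as $\|X_i-X_i'\|_s\ge\|X_i-X_i'\|_r$, we may use the exponent $\theta:=\max\{1/2,\log_2\alpha\}$. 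Thus it is enough to prove \eqref{eq:p2qlincomb} assuming the $X_i$ are independent symmetric with $\|X_i\|_s\le C(\alpha)(s/r)^{\theta}\|X_i\|_r$ for all $s\ge r\ge2$; by symmetry we may also take $t_i\ge0$.

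\emph{Conditional linearisation.} For symmetric $X_i$, $\sum_i t_iX_i\overset{d}{=}\sum_i t_i|X_i|\varepsilon_i$ with $(\varepsilon_i)$ an independent Bernoulli sequence, so conditioning on $(|X_i|)$ and applying Hitczenko's two‑sided bound \cite{MR1244666} (as in the proof of Proposition~\ref{prop:est_bern}) yields, for $r\ge1$,
\[
\Bigl\|\sum_i t_iX_i\Bigr\|_r\ \sim\ \bigl\|\,\|(t_i|X_i|)_i\|_{(r)}\,\bigr\|_{L_r},\qquad
\|a\|_{(r)}:=\sum_{j\le r}a_j^{\ast}+\sqrt r\Bigl(\sum_{j>r}(a_j^{\ast})^{2}\Bigr)^{1/2}\ \sim\ \inf_{a=a'+a''}\bigl(\|a'\|_1+\sqrt r\,\|a''\|_2\bigr),
\]
where $a^{\ast}$ is the non‑increasing rearrangement and the constants are universal; equivalently one may quote directly the two‑sided moment estimate for linear combinations of variables with regular moments extracted from the proof of \cite[Theorem~2.3]{LaTk}. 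It remains to prove $\bigl\|\,\|a\|_{(p)}\,\bigr\|_{L_p}\le C(\alpha)(p/q)^{\theta}\bigl\|\,\|a\|_{(q)}\,\bigr\|_{L_q}$ for $p\ge q\ge2$, with $a=(t_i|X_i|)_i$.

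\emph{The comparison, and where the difficulty sits.} The naive route — bound $\|a\|_{(p)}\le C\sqrt{p/q}\,\|a\|_{(q)}$ pointwise and then compare $\|a\|_{(q)}$ in $L_p$ against $L_q$ — produces the exponent $\tfrac12+\log_2\alpha$, which is too large; the point is that the factor $\sqrt{p/q}$ from the $\ell_2$‑radius and the factor $(p/q)^{\log_2\alpha}$ from the individual moments must never be multiplied. To achieve $\theta=\max\{1/2,\log_2\alpha\}$ one compares the two norms \emph{along one configuration}: for each realisation take a splitting $a=a'+a''$ (at a level depending on $(|X_i|)$) with $\|a\|_{(q)}\sim\|a'\|_1+\sqrt q\,\|a''\|_2$, where $a'$ has at most $q$ non‑zero coordinates and $a''$ has all coordinates below the level; since enlarging $r$ only relaxes the infimum, the same splitting gives $\|a\|_{(p)}\le\|a'\|_1+\sqrt p\,\|a''\|_2$. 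Taking $L_p$‑norms, the contribution of $\|a'\|_1$ is controlled by the moment growth of the individual variables — i.e.\ the case \eqref{eq:p2qlincomb} for sums of non‑negative independent variables with regular moments, applied to the top $q$ rearranged coordinates — giving a factor $(p/q)^{\log_2\alpha}$ and \emph{no} $\sqrt{p/q}$; the contribution $\sqrt p\,\bigl\|\,\|a''\|_2\,\bigr\|_{L_p}$ is handled against $\sqrt q\,\bigl\|\,\|a''\|_2\,\bigr\|_{L_q}$ by a Khintchine‑type square‑function estimate, the truncated sequence $a''$ being flat enough that its $\ell_2$‑square function has sub‑Gaussian moment growth, so that the remaining factor is $\sqrt{p/q}\cdot(p/q)^{\theta-1/2}=(p/q)^{\theta}$. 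Since both $\|a'\|_1$ and $\sqrt q\,\|a''\|_2$ are $\lesssim\|a\|_{(q)}$, adding the two estimates gives the required inequality, and hence \eqref{eq:p2qlincomb}. The separate ingredients one must still supply are the moment comparison for sums of non‑negative independent variables with regular moments (same scheme, but easier since there is no cancellation), the square‑function estimate for the $\ell_2$‑term, and the elementary stability of the threshold splitting under $q\mapsto p$; this last comparison is the step I expect to be the most delicate.
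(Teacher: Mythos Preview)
Your reduction to symmetric variables and your diagnosis of the difficulty (the $\sqrt{p/q}$ and $(p/q)^{\log_2\alpha}$ factors must not be multiplied) are both correct, and the route via Hitczenko's two-sided bound is a natural starting point. But the splitting argument that follows is not a proof, and I do not see how to close it along the lines you suggest.

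The concrete problems are these. First, the decomposition $a=a'+a''$ you choose is the rearrangement split, so $a'=\sum_{j\le q}a_j^{\ast}$ is a sum of order statistics of $(t_i|X_i|)_i$; these are \emph{not} independent, so invoking ``\eqref{eq:p2qlincomb} for sums of non-negative independent variables'' is either a category error or circular. Second, your claim that the truncated tail $\|a''\|_2$ has ``sub-Gaussian moment growth'' so that $\sqrt p\,\|\,\|a''\|_2\,\|_{L_p}\le C(p/q)^{\theta}\sqrt q\,\|\,\|a''\|_2\,\|_{L_q}$ is not justified: when $\theta=1/2$ this would force $\|\,\|a''\|_2\,\|_{L_p}\le C\|\,\|a''\|_2\,\|_{L_q}$ for all $p\ge q$, i.e.\ essentially bounded, which fails already for Gaussians if the truncation level is not chosen with great care; and the level you use, $a_q^{\ast}$, is random and coupled to everything else. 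Third, the ``stability of the threshold splitting under $q\mapsto p$'' that you flag as delicate is in fact the whole content of the lemma in disguise. You have correctly located the obstacle but not removed it.

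For comparison, the paper avoids all of this by a completely different mechanism: after reducing to even exponents $2k\ge 2l$ and symmetrising, it applies the hypercontractivity method of Kwapie\'n--Woycy\'nski, which reduces the multivariable inequality to the \emph{single-variable} claim
\[
\bigl\|\,1+\sigma Y\,\bigr\|_{2k}\le \bigl\|\,1+Y\,\bigr\|_{2l},\qquad \sigma=\tfrac{1}{2\sqrt{2}\,e\,\alpha}\Bigl(\tfrac{l}{k}\Bigr)^{\theta},
\]
for any symmetric $Y$ with $\|Y\|_{2p}\le\alpha\|Y\|_p$. This claim is then verified by expanding both sides binomially and comparing term by term, using $\|Y\|_s\le\alpha(s/r)^{\theta}\|Y\|_r$; the sharp exponent $\theta=\max\{1/2,\log_2\alpha\}$ falls out of a case split $j\le k/l$ versus $j\ge k/l$ in the comparison. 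The hypercontractivity reduction is what makes the two regimes decouple cleanly; your rearrangement split tries to achieve the same decoupling at the level of the random vector, where the interactions are much harder to control.
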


\begin{proof}
Let $\beta:=\max\{ 1/2,\log_2\alpha\}$.
It is enough to show that for positive integers $k\le l$ we have
\[
\biggl\|\sum_{i=1}^n t_iX_i \biggr\|_{2k}
\le 
C\alpha \biggl(\frac{k}{l}\biggr)^{\beta}
\biggl\|\sum_{i=1}^n t_iX_i \biggr\|_{2l}. 
\]
 A standard symmetrization argument
shows that we may assume that  the  random variables $X_i$ are symmetric (see the proof of Theorem \ref{thm:p2p_to_paour} in the non-symmetric case). 

Using  the hypercontractivity method \cite[ Section 3.3]{KwWo}, it is enough to show that for $1\le i\le n$,
\[
\biggl\|s+\frac{t}{2\sqrt{2}e\alpha}\biggl(\frac{l}{k}\biggr)^{\beta}X_i\biggr\|_{2k}
\le \bigl\|s+tX_i\bigr\|_{2l}
\quad \mbox{for all } s,t\in \er.
\]
This reduces to the following claim.

\medskip

\noindent
{\bf Claim.} Suppose that $Y$ is a symmetric  random variable such that $\|Y\|_{2p}\le \alpha\|Y\|_p$ for
some $\alpha\ge 1$ and every $p\ge 2$. Let $k\ge l$ be positive
integers. Then
\[
\bigl\|1+\sigma Y\bigr\|_{2k}\le \bigl\|1+Y \bigr\|_{2l},
\quad \mbox{ where } \sigma:=\frac{1}{2\sqrt{2}e\alpha}\biggl(\frac{l}{k}\biggr)^{\beta}.
\]
\medskip

To show the claim observe first that
\begin{equation}
\label{eq:compmom_q_p}
\|Y\|_{q}\le \alpha\biggl(\frac{q}{p}\biggr)^{\log_2 \alpha}\|Y\|_p
\le \alpha\biggl(\frac{q}{p}\biggr)^{\beta}\|Y\|_p
\quad \mbox{ for }q\ge p\ge 2.
\end{equation}

 Moreover we have
\begin{align*}
\Ex\bigl|1+\sigma Y\bigr|^{2k}
&=1+\sum_{j=1}^{k}\binom{2k}{2j}\Ex\left|\sigma Y\right|^{2j}
\le 1+\sum_{j=1}^{k}\biggl(\frac{ek}{j}\sigma\|Y\|_{2j}\biggr)^{2j}
\\
&\le 1+\sup_{1\le j\le k}\biggl(\frac{\sqrt{2}ek}{j}\sigma\|Y\|_{2j}\biggr)^{2j},
\end{align*}
 so it is enough to show that
\begin{equation}
\label{eq:aim}
1+\biggl(\frac{k^{1-\beta}l^{\beta}}{2j\alpha}\|Y\|_{2j}\biggr)^{2j}\le \bigl\|1+Y\bigr\|_{2l}^{2k}
\quad \mbox{ for }j=1,2\ldots l.
\end{equation}

To this end we will use the following deterministic inequality:
\begin{equation}
\label{eq:simple}
(1+u)^p\ge \biggl(1+\frac{p}{q}u\biggr)^q\ge 1+\biggl(\frac{p}{q}u\biggr)^q\quad
\mbox{ for $p\ge q\ge 1$ and $u\ge 0$,}
\end{equation}
and  a simple lower bound for $\|1+Y\|_{2l}^{2l}$:
\begin{equation}
\label{eq:mom2lbelow}
\Ex|1+Y|^{2l}=1+\sum_{r=1}^{l}\binom{2l}{2r}\Ex|Y|^{2r}
\ge 1+\sum_{r=1}^l\biggl(\frac{l}{r}\|Y\|_{2r}\biggr)^{2r}.
\end{equation}

Assume first that $1\le j\le \frac{k}{l}$. Estimate \eqref{eq:compmom_q_p} applied with $p=2j$ and $q=2$ yields 
\[
\frac{k^{1-\beta}l^{\beta}}{2j\alpha}\|Y\|_{2j}\le \frac{k^{1-\beta}l^{\beta}}{j^{1-\beta}}\|Y\|_{2}
\le \sqrt{\frac{kl}{j}}\|Y\|_{2},
\]
where the last inequality holds since $\beta\ge \frac{1}{2}$ and $k\ge jl$. 
Inequalities \eqref{eq:mom2lbelow} and \eqref{eq:simple} (applied with $p=k/l$ and $q=j$) yield
\[
\bigl\|1+Y\bigr\|_{2l}^{2k}\ge \bigl(1+(l\|Y\|_2)^2\bigr)^{k/l}\ge 1+\Biggl(\sqrt{\frac{kl}{j}}\|Y\|_{2}\Biggr)^{2j}
\]
 so \eqref{eq:aim} holds for $j\le \frac{k}{l}$.

If $j\ge \frac{k}{l}$ we choose $r=\lceil jl/k\rceil $, then $jl\le kr\le 2jl$. 
Since $1\le r\le l$,   the estimate \eqref{eq:mom2lbelow} gives 
\[
\bigl\|1+Y\bigr\|_{2l}^{2k}\ge \Biggl(1+\biggl(\frac{l}{r}\|Y\|_{2r}\biggr)^{2r}\Biggr)^{k/l}
\ge \Biggl(1+\biggl(\frac{l}{r}\|Y\|_{2r}\biggr)^{2r}\Biggr)^{j/r}
\ge 1+\biggl(\frac{l}{r}\|Y\|_{2r}\biggr)^{2j},
\]
where to get the last two inequalities we used $k/l\ge j/r$ and $j/r\ge 1$.
Applying estimate \eqref{eq:compmom_q_p} with $2j$ and $2r$ instead of $p$ and $q$ we get
\[
\frac{k^{1-\beta}l^{\beta}}{2j\alpha}\|Y\|_{2j}
\le \frac{k^{1-\beta}l^{\beta}}{2j}\biggl(\frac{j}{r}\biggr)^\beta\|Y\|_{2r}
\le \frac{k}{2j}\|Y\|_{2r}\le \frac{l}{r}\|Y\|_{2r},
\] 
which completes the proof of the claim in the remaining case. 

\end{proof}

\begin{proof}[Proof of Corollary \ref{cor:weakstrongtail}]
Let  
\[
S:=\sup_{t\in T} \Bigl|\sum_{i=1}^n t_iX_i \Bigr|.
\]

By the Paley-Zygmund inequality and \eqref{eq:p2qlincomb} we have for $t\in T$,
\begin{align}
\notag
\Pr\Biggl(\Bigl|\sum_{i=1}^n t_iX_i \Bigr|\ge \frac{1}{2}\biggl\|\sum_{i=1}^n t_iX_i \biggr\|_p\Biggr)
&=\Pr\biggl(\Bigl|\sum_{i=1}^n t_iX_i \Bigr|^p\ge 2^{-p}\Ex\Bigl|\sum_{i=1}^n t_iX_i \Bigr|^p\biggr) 
\\
\label{eq:PZbelow}
&\ge 
(1-2^{-p})^2\Biggl(\frac{\bigl\|\sum_{i=1}^n t_iX_i\bigr\|_p}{\bigl\|\sum_{i=1}^n t_iX_i\bigr\|_{2p}}\Biggr)^{2p}
\ge e^{-C_4(\alpha)p}. 
\end{align}

In order to show \eqref{eq:weakstrongtail} we consider 3 cases.\\

\textbf{Case 1.} $2u<\sup_{t\in T}\|\sum_{i=1}^n t_iX_i\|_{2}$. Then by \eqref{eq:PZbelow}
\[
\sup_{t\in T}\Pr\biggl(\Bigl|\sum_{i=1}^nt_iX_i\Bigr|\ge u\biggr)
\ge e^{-2C_4(\alpha)}
\]
and \eqref{eq:weakstrongtail} obviously holds if $C_2(\alpha)\ge \exp(2C_4(\alpha))$. 

\textbf{Case 2.} $\sup_{t\in T}\|\sum_{i=1}^n t_iX_i\|_{2}\le 
2u<\sup_{t\in T}\|\sum_{i=1}^n t_iX_i\|_{\infty}$.
Let us then define
\[
p:=\sup\biggl\{q\ge 2C_4(\alpha)\colon\ 
\sup_{t\in T}\Bigl\|\sum_{i=1}^n t_iX_i\Bigr\|_{q/C_4(\alpha)}\le 2u\biggr\}.
\]
By \eqref{eq:PZbelow} we have

\[
\sup_{t\in T}\Pr\biggl(\Bigl|\sum_{i=1}^n t_iX_i\Bigr|\ge u\biggr)\ge e^{-p}.
\]

By \eqref{eq:p2qlincomb} we have 
$\sup_{t\in T}\|\sum_{i=1}^n t_iX_i\|_{p}\le C(\alpha)u$, so  by Theorem \ref{thm:p2p_to_paour} and Chebyshev's inequality
we have
\[
\Pr(S\ge C_1(\alpha)(\Ex S+u))\le
\Pr(S\ge e\|S\|_p)\le e^{-p}
\]
 for $C_1(\alpha)$
large enough.
Thus \eqref{eq:weakstrongtail} holds in this case.

\textbf{Case 3.} $u>\sup_{t\in T}\|\sum_{i=1}^n t_iX_i\|_{\infty}=\|S\|_\infty$. Then
$\Pr(S\ge u)=0$ and \eqref{eq:weakstrongtail} holds for any $C_1(\alpha)\ge 1$.
\end{proof}

\begin{proof}[Proof of Corollary \ref{cor:KahKh}]
The result is an immediate consequence of Theorem \ref{thm:p2p_to_paour}, \eqref{eq:p2qlincomb} and \eqref{eq:lowtrivial} used with $q$ instead of $p$.
\end{proof}

\section{ Comparison  of  weak and strong  moments of suprema \\  implies comparison of  moments $p$ and $2p$  }

\begin{proof}[Proof of Theorem  \ref{thm:paour_to_p2p}]
We will use the assumption \eqref{thm_assump:paour_to_p2p} for $T$ containing all vectors of the standard base of 
$\R^n$and their negatives, i.e. we will use only the inequality
\begin{equation}
\label{assumpt}
\Bigl(\Ex  \supi |X_i|^p \Bigr)^{1/p} 
\le L \Bigl[  \Ex  	\supi |X_i|  + \|X_1\|_p \Bigr] .
\end{equation}

Fix $p\ge 2$ and let $n:=\lfloor (4L)^{2p} \rfloor +1 $,  $A:= n^{1/p} \|X_1\|_p$. 
If $A\ge \|X_1\|_{2p}$, then \eqref{eq:comp_p2p} holds with $\alpha = (4L)^2+1$. 
Hence we may and will assume $A\le \|X_1\|_{2p}$.

Obviously 
\[
\Pr\Bigl(\supi |X_i|\ge t\Bigr)  \le \min \bigl\{1, n\Pr\bigl(|X_1|\ge t\bigr)\bigr\}.
\]
Moreover, if $\Pr(|X_1|\ge t) \le \frac{1}n$, 
\begin{align*}
\Pr\Bigl(\supi |X_i|\ge t\Bigr) 
&= 1-\Pr\bigl(|X_1|< t\bigr)^n
= \Pr\bigl(|X_1|\ge t\bigr)\sum_{k=0}^{n-1}\Pr\bigl(|X_1|<t\bigr)^k 
\\
&\ge 
\Pr\bigl(|X_1|\ge t\bigr) \cdot n\biggl(1-\frac 1n \biggr)^{n-1} \ge \frac n3 \Pr\bigl(|X_1|\ge t\bigr).
\end{align*}
Since $\Pr(|X_1|\ge A)\le \frac 1n$  (which follows by the Markov inequality) and $A\le \|X_1\|_{2p}$, 
we have
\begin{align*}
\Ex \supi |X_i|^{2p} 
&\ge 
2p \int_A^\infty t^{2p-1}\Pr\Bigl(\supi |X_i|\ge t\Bigr)dt 
\ge  2p\int_A^\infty t^{2p-1}\frac n3 \Pr\bigl(|X_1|\ge t\bigr)dt
\\ 
&= 
\frac n3 \Ex\bigl( |X_1|^{2p} - A^{2p} \bigr)_+ 
\ge \frac n3 \bigl(\|X_1\|_{2p}^{2p} - A^{2p} \bigr) 
\ge \frac n3 \bigl(\|X_1\|_{2p} - A \bigr)^{2p}
\end{align*}
and
\begin{align*}
\Ex \supi|X_i| 
&\le 
A + \int_A^\infty \Pr\Bigl(\supi |X_i|\ge t\Bigr)dt 
\le A+ n\int_A^\infty \Pr\bigl(|X_1|\ge t\bigr)dt 
\\
&\le A +n\Ex\bigl(|X_1|\ind_{\{|X_1|\ge A\}}\bigr) 
\le A+ n\|X_1\|_p \Pr\bigl(|X_1|\ge A\bigr)^{1-\frac 1p} 
\\
&\le A + n^{1/p}\|X_1\|_p,
\end{align*}
where in the last inequality we used again the fact that $\Pr(|X_1|\ge A)\le \frac 1n$. 

Thus our choice of $n$ and $A$, and \eqref{assumpt} (applied to $2p$ instead of $p$) imply that
\begin{align*}
2L\|X_1\|_{2p} 
& \le 
\frac 12 n^{\frac 1{2p}} \|X_1\|_{2p}
\le \frac 12 n^{\frac 1{2p}}A+\Bigl(\Ex  \supi |X_i|^{2p} \Bigr)^{1/(2p)} 
\\
&\le \frac 12 n^{\frac 1{2p}}A+L \Bigl[  \Ex  \supi |X_i|  + \|X_1\|_{2p} \Bigr]
\\
&\le 
\frac{1}2n^{\frac{1}{2p}} A + LA + Ln^{\frac 1p}\|X_1\|_p + L\|X_1\|_{2p}  
\\ 
&\le \|X_1\|_p\Bigl(\frac 12 (4L+1)n^{\frac 1p} + 2Ln^{\frac1p}  \Bigr) +L\|X_1\|_{2p} 
\\
&\le  
\biggl(4L+\frac{1}{2}\biggr)\bigl((4L)^2 +1 \bigr)\|X_1\|_p +L\|X_1\|_{2p}.  
\end{align*}
Thus 
\begin{equation*}
\|X_1\|_{2p} \le  \biggl(4+\frac{1}{2L}\biggr)\bigl(16L^2 +1 \bigr) \|X_1\|_p.
\end{equation*}
\end{proof}

\begin{remark}
It is clear from the proof above that we may take $\alpha(L)=CL^2$ in Theorem \ref{thm:paour_to_p2p}.
\end{remark}

\bibliographystyle{amsplain}
\bibliography{weakstrong_indcoord_arxiv}

\end{document}